\providecommand{\U}[1]{\protect \rule{.1in}{.1in}}
\newtheorem{theorem}{Theorem}
\newtheorem{corollary}[theorem]{Corollary}
\newtheorem{definition}[theorem]{Definition}
\newtheorem{lemma}[theorem]{Lemma}
\newtheorem{proposition}[theorem]{Proposition}
\newtheorem{remark}[theorem]{Remark}
\newenvironment{proof}[1][Proof]{\textbf{#1.} }{\  \rule{0.5em}{0.5em}}
\begin{document}

\title{\textbf{Morse foliations of codimension one on the sphere }$S^{3}$}
\author{Charalampos Charitos}
\maketitle

\begin{abstract}
Morse foliations of codimension one on the sphere $S^{3}$ are studied and the
existence of special components for these foliations is derived. As a
corollary the instability of Morse foliations of $S^{3}$ can be proven
provided that all the leaves are not simply connected.

\end{abstract}

\section{\bigskip Introduction}

In this paper we study Morse foliations $\mathcal{F}$ of codimension one and
of class $C^{\infty}$ of the sphere $S^{3}.$ Such a foliation $\mathcal{F}$
has finitely many singularities of conic or central type; outside these
singularities $\mathcal{F}$ is a foliation in the usual sense. For each
singularity $s$ there is an open neighborhood $U$ of $s$ homeomorphic to
$R^{3},$ such that the leaves of the induced foliation $\mathcal{F}_{|U}$ are
defined as the level sets of a Morse function $f:U\rightarrow R.$ Such a
neighborhood $U$ will be called a \textit{trivial neighborhood} of $s.$ The
complete definition of Morse foliations will be given at the beginning of the
next section.

Each Morse foliation can be transformed by a modification which will be
referred to as a \textit{Morse modification. }This modification is defined
locally and moves conic singularities to nearby leaves. A \textit{\ Morse
component }$\mathcal{M}$ is a Morse foliation defined on the solid torus
$D^{2}\times S^{1}$ which contains one center, one conic singularity and all
the non-singular leaves of $\mathcal{M}$ are either tori or 2-spheres. \ 

The main result of this work is the following.

\begin{theorem}
\noindent \  \textbf{\ }\label{main theorem} \textit{Let }$\mathcal{F}%
$\textit{\ be a Morse foliation of codimension one of }$S^{3}$ which admits at
least one non-simply connected leaf. \textit{\ Then, modulo Morse
modifications, }$\mathcal{F}$\textit{\ contains either a Reeb component or a
Morse component. }
\end{theorem}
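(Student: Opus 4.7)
The plan is to adapt Novikov's classical vanishing-cycle argument to the singular setting of Morse foliations. Starting from a leaf $L_{0}$ which is not simply connected, I would choose a simple closed curve $\gamma\subset L_{0}$ that is essential in $L_{0}$; because $\pi_{1}(S^{3})=1$, the curve $\gamma$ bounds a singular disk in $S^{3}$, which, after a small perturbation, can be assumed to meet the finite set of singularities of $\mathcal{F}$ only in isolated points and otherwise to be in general position with respect to $\mathcal{F}$.

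Next I would run the disk-minimization procedure of Novikov, shrinking the spanning disk within its relative homotopy class while keeping its boundary circle on leaves of $\mathcal{F}$. Whenever the sweeping disk is about to encounter a conic singularity, I would apply a Morse modification to displace that singularity onto a nearby leaf, restoring locally a non-singular picture in which the classical argument continues. This should yield either a vanishing cycle bounding an embedded disk transverse to $\mathcal{F}$, or a compact leaf $T$ with non-trivial one-sided holonomy; by the standard considerations (euler characteristic and one-sided holonomy), $T$ must be a torus.

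Finally I would analyze the two sides of $T$ in $S^{3}$. The holonomy side is a solid torus whose leaves spiral onto $T$; if it contains no singularity of $\mathcal{F}$, the restriction of $\mathcal{F}$ is a Reeb component and the theorem is proved. Otherwise, the presence of at least one center forces a family of spherical leaves inside this solid torus, and a Morse/Euler-characteristic count on $D^{2}\times S^{1}$, combined with the requirement that the remaining non-singular leaves be tori approaching $T$, pins the configuration down to exactly one center and one conic singularity; this is the defining description of a Morse component $\mathcal{M}$.

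The main obstacle I anticipate is carrying Novikov's construction through in the presence of Morse singularities: I need to verify that the Morse modifications used to move conic points off the sweeping disk do not destroy the essentiality of $\gamma$, that the iterative shrinking process terminates after finitely many such modifications, and that the resulting compact leaf is genuinely a torus rather than some higher-genus surface. A secondary but still delicate point is the Morse/Euler count on the bounded solid torus, which must rule out configurations with more than one center or more than one conic singularity inside; this should follow from the structure of the levels of a Morse function on $D^{2}\times S^{1}$ whose generic levels are tori, but it has to be argued carefully.
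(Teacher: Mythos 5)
Your outline hinges on making Novikov's disk-shrinking argument go through by ``displacing'' each conic singularity with a Morse modification so that ``the classical argument continues,'' but this is precisely the step that fails, and it is where the real content of the theorem lies. A Morse modification does not remove a conic singularity: it only changes which nearby level carries the double cone (it trades two disc leaves joined at a point for a cylindrical leaf pinched along an essential curve), so the sweeping disk still encounters a singular region after the modification. Concretely, the circles of the induced foliation on your spanning disk that are essential in their leaves may bound, \emph{inside those leaves}, discs or pseudo-discs containing a conic singularity --- the anti-vanishing cycle situation of Definition \ref{anti-vanishing} and Lemma \ref{associated disc-gen Morse component} --- and in that case no vanishing cycle, hence no Reeb component, is produced at all. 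Morse foliations of $S^{3}$ with non-simply-connected leaves can genuinely lack vanishing cycles, so your dichotomy ``vanishing cycle or compact toral leaf with one-sided holonomy'' is not available in this setting. The paper's proof devotes Sections 4 and 5 exactly to this missing case: the vanishing-cycle case is dispatched by the generalized Novikov theorem (Theorem \ref{Reeb component}), while in its absence one must first eliminate trivial pairs of singularities, bubbles and truncated bubbles, and then run a global argument with perfect discs, trivially foliated balls with spots, captured spots and a limiting family of discs (Proposition \ref{captured spot} and Theorem \ref{existence Morse component}) to manufacture a truncated Reeb component, which a final Morse modification converts into a Reeb component.

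A secondary gap is your last step: even granted a compact toral leaf $T$ bounding a solid torus that contains singularities, a Morse/Euler-characteristic count does not pin down a Morse component. The Euler characteristic only constrains a signed count of centers and conic points, so the singular side may contain arbitrarily many trivial pairs of singularities, bubbles and truncated bubbles, and the bounding compact leaf may be a pseudo-torus rather than a torus. The paper controls this not by counting but by explicit elimination procedures (trivial pairs, bubbles, truncated bubbles) together with the corrective movements and the bookkeeping of inverse Morse modifications in Theorem \ref{corollary2} and Corollary \ref{conclusion}. As it stands, your proposal identifies the right classical tools but leaves unresolved the two essential difficulties: the case with no vanishing cycle, and the reduction of the singular side to the standard Reeb or Morse models.
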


As a corollary of this theorem we may prove the following theorem which
answers affirmatively a conjecture of Rosenberg and Roussarie \cite{[Ros-Rou]}.

\begin{theorem}
\label{stability} The only stable Haefliger structures of $S^{3}$ are those
defined by a Morse function having distinct critical values and whose level
surfaces are all simply connected.
\end{theorem}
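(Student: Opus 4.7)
The plan is to derive Theorem~\ref{stability} as a corollary of Theorem~\ref{main theorem}. I would first invoke the standard fact that any stable Haefliger $\Gamma^{1}$-structure on a closed $3$-manifold can be represented by a codimension-one foliation with only Morse-type singularities; call this representative $\mathcal{F}$.

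The heart of the argument is the contrapositive: if not all leaves of $\mathcal{F}$ are simply connected, then $\mathcal{F}$ is unstable. By Theorem~\ref{main theorem}, after finitely many Morse modifications, $\mathcal{F}$ contains either a Reeb component or a Morse component. Since each Morse modification is, by construction, a $C^{\infty}$-small local deformation (pushing a conic singularity onto a nearby leaf), stability of $\mathcal{F}$ persists through it, and we may assume $\mathcal{F}$ itself contains one of the two exceptional components. For a Reeb component, instability is classical: a small bump perturbation transverse to the toral leaf opens it into a one-parameter family of non-compact leaves and alters the Haefliger homotopy class. For a Morse component $\mathcal{M}\subset D^{2}\times S^{1}$, a finer perturbation must be produced near the central singularity, so that the relative position of the conic separatrix and the surrounding toral leaves is altered in a way that cannot be undone by any $C^{\infty}$-small isotopy.

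The main obstacle, in my view, is the Morse-component case: unlike the Reeb situation, there is no classical template for the destabilising perturbation. I would attempt to construct it by deforming the center to a nearby configuration of different local level-set topology, or alternatively by introducing a small saddle-connection between the conic singularity and a new center created in the process. Verifying that the perturbed foliation is not $\Gamma^{1}$-homotopic to $\mathcal{F}$ is the delicate point; I would approach it by exhibiting a discrete invariant of the component (for instance a holonomy-type invariant of the circle of torus leaves) that jumps under the perturbation.

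For the converse direction, if all leaves of $\mathcal{F}$ are simply connected then every compact non-singular leaf is a $2$-sphere, and so no conic singularity can occur (a saddle of index $1$ or $2$ in dimension three produces nearby level surfaces with non-trivial first homology). Hence the singularities of $\mathcal{F}$ are all centers and, using $\chi(S^{3})=0$, $\mathcal{F}$ must be the level-set foliation of a Morse function $f:S^{3}\to R$ with exactly one minimum and one maximum; the two critical values are then automatically distinct. Stability of such a structure under $C^{\infty}$-small perturbations reduces to the persistence of non-degenerate critical points of a Morse function, which is standard.
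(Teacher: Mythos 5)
There are genuine gaps on both directions of the argument. For the instability direction, your treatment of the Reeb case as "classical: a small bump perturbation transverse to the toral leaf" skips the actual mechanism. A toral leaf whose holonomy is not infinitely tangent to the identity can persist under $C^{1}$-small perturbations, so no generic bump works; the paper (following Rosenberg--Roussarie) must first show that some toral leaf is \emph{flat} (this uses the Reeb-component--erasing argument, and in the Morse setting one must additionally check that erasing does not create Morse components or a foliation with all leaves simply connected), then \emph{thicken} that flat leaf into a band $T\times[0,1]$ of toral leaves, and finally perturb the band rel boundary to a foliation with finitely many compact leaves. Note also that instability here means failure of topological equivalence to nearby foliations, not a change of Haefliger homotopy class, so "alters the Haefliger homotopy class" is not the relevant criterion. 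For the Morse-component case you concede you have no template; the paper's argument is concrete and uses the same band mechanism: around the boundary torus (or pseudo-torus) of the Morse component either a band of leaves already exists, or a $C^{1}$-small Morse modification creates one, and a band is unstable. Your proposed "holonomy-type invariant that jumps" is not constructed, so this half of the proof is missing.

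The converse direction is actually wrong as stated. You claim that simple connectivity of all leaves forces the absence of conic singularities because "a saddle of index $1$ or $2$ produces nearby level surfaces with non-trivial first homology." This is only a local statement: the cylindrical local piece can close up globally into a sphere. The paper's own Example (1) (connected sums of balls foliated by concentric spheres) gives Morse foliations of $S^{3}$ with conic singularities whose leaves are all compact and simply connected — the singular leaves are two spheres attached at a point — and these are exactly the stable structures of Theorem \ref{stability}; they correspond to Morse functions with extra minima/maxima and saddles, not just one minimum and one maximum (your appeal to $\chi(S^{3})=0$ does not force two critical points). Consequently your characterization of the stable class is strictly too small, and your stability proof ("persistence of non-degenerate critical points") does not address the real issue, which is propagating Reeb's stability theorem across the singular leaves: the paper shows that a singular leaf adjacent to spherical leaves is two $2$-spheres attached at a point, so Reeb stability applies beyond it, and structural stability follows after finitely many such steps (together with the normalization, via a small Morse modification, that each leaf carries at most one conic singularity).
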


Generally, Morse foliations are an important generalization of foliations and
their study has a long history, see for example \cite{[Reeb1]}, \cite{[Ferry]}%
, \cite{[Wagneur]}, \cite{[Cam Sca]}, \cite{[Cam Sca1]}, \cite{[Rosati]}. In
particular, in \cite{[Rosati]} classical compact leaf theorems of Haefliger
and Novikov are generalized (see for instance, Theorem 9.1). Nevertheless, the
latter are proved under conditions on the set of singular points of a singular
foliation which are strong and not related to our setup.

Despite the long time period, the theory of singular foliations on 3-manifolds
did no grow as much as that of regular foliations. The reason is that each
closed 3-manifold $M$ can be foliated by a regular foliation of codimension
one. However, if $M$ is compact with $\partial M\neq \varnothing$ and if the
genus of $\partial M$ is greater that one, then in order to foliate $M$
tagentially to the boundary we need to consider Morse foliations. On the other
hand, Haefliger structures are an even greater generalization of foliations
\cite{[Haefliger]}, \cite{[Lawson]} and they are quite useful to make a
homotopy-theoretic approach to foliations. Below, we will see that Morse
foliations are generic in the space of Haefliger structures of codimension one
fact that makes the class of Morse foliations particular
interesting.\smallskip

The present work is organized as follows:

In section 2 definitions are given and basic notions are explained in order to
fix the terminology. Typical examples of Morse foliations of $S^{3}$ are also constructed.

In section 3 we recall the definition of vanishing cycle and we generalize
Novikov's theorem for Morse foliations. We also introduce the notion of
anti-vanishing cycle and examine its consequences.

In section 4 and 5 we prove Theorem \ref{main theorem}. Actually, we prove
that if $\mathcal{F}$ does not contain a Reeb or a Morse component and if
$\mathcal{F}$ does not have trivial pair of singularities then $\mathcal{F}$
contains either a \textit{truncated Reeb component} or a \textit{truncated
Morse component }or $\mathcal{F}$ has two centers and all the other leaves of
it are 2-spheres. Notice that, a truncated Reeb or Morse component can be
transformed to a Reeb or Morse component respectively by a Morse modification.

In section 6 we prove Theorem \ref{stability}. \begin{figure}[ptb]
\begin{center}
\includegraphics[scale=0.76]
{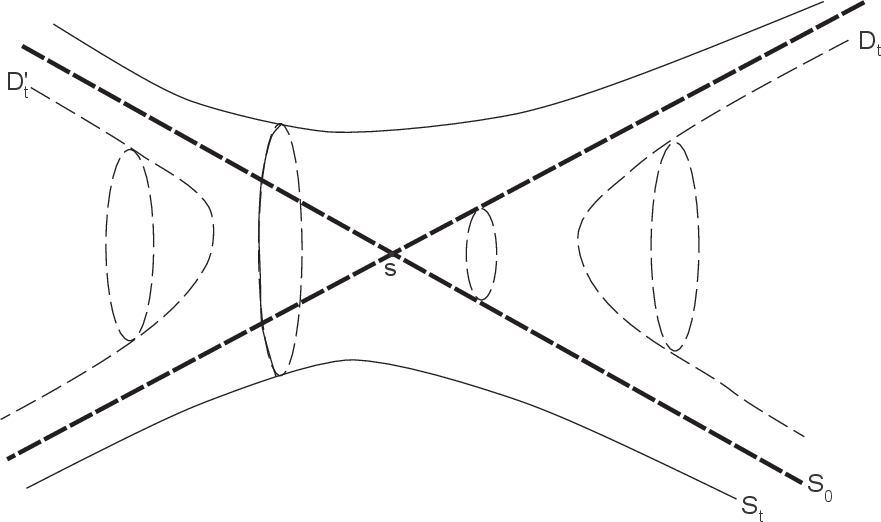}
\end{center}
\caption{The level surfaces in a trivial neighborhood of a conic singularity.
}%
\label{sxima1}%
\end{figure}

\section{Definitions and Preliminaries}

\subsection{Morse foliations and Morse modification}

First we will give the precise definition of Morse foliations of codimension
one of a compact $n$-manifold.

\begin{definition}
Let $M$ be a compact, orientable $n$-manifold of class $C^{r},$ $n\geq2$ and
$r\geq2.$ By a Morse foliation $\mathcal{F}$ of codimension one of $M$ which
is tangent to the boundary $\partial M$ if $\partial M\neq \varnothing,$ we
mean a decomposition of $M$ into a union of disjoint connected subsets
$\{L_{a}\}_{a\in \mathcal{A}},$ called the leaves of the foliation such that:

(1) There are finitely many points $s_{i}$ in $M$ which have the following
property: For each $s_{i}$ there exists an open neighborhood $U_{i}$ of
$s_{i}$ in $M$ such that for each leaf $L_{a},$ the components of $U_{i}\cap
L_{a}$ are described as level sets of a Morse function $f_{i}:U_{i}%
\rightarrow(-1,1)$ of class $C^{r}.$ Furthermore, the point $s_{i}$ is the
unique non-degenerate critical point of $f_{i}.$

(2) For each point $p\in M,$ $p\neq s_{i}$ there exists an open neighborhood
$U$ of $p$ such that for each leaf $L_{a},$ the components of $U\cap L_{a}$
are described as level sets of a submersion $f_{i}:U\rightarrow(-1,1)$ of
class $C^{r}.$

The points $s_{i}$ are called singularities of $\mathcal{F}.$ If the set
$\{s_{i}\}$ is empty then $\mathcal{F}$ is a foliation in the usual sense and
will referred to as a regular foliation or simply as foliation of $M.$ The
open neighborhood $U_{i}$ of $s_{i}$ will be called trivial, while the
neighborhood $U$ of $p\neq s_{i}$ is called a local chart of $\mathcal{F}$.

In particular, for each $s_{i}$ there is a system of coordinates
$x=(x_{1},\cdots,x_{n})$ on a neighborhood $U_{i}$ of $s_{i}$ such that
\[
f_{i}(x)=-\sum_{i=1}^{k}x_{i}^{2}+\sum_{i=k+1}^{n}x_{i}^{n}.
\]

In each $U_{i}$ with respect to the system of coordinates $(x_{1},\cdots
,x_{n}),$ the singularity $s_{i}$ is identified with the point $(0,\cdots,0),$
i.e. $f_{i}^{-1}(0)=s_{i}.$

In both special cases $f_{i}(x)=-\sum_{i=1}^{n}x_{i}^{2}$ and $f_{i}%
(x)=\sum_{i=1}^{n}x_{i}^{2},$ the singularity $s_{i}$ is called a center. For
the centers, the level surfaces are defined for each value in $(-1,0]$ or in
$[0,1).$

If a singularity is not a center it is called conic.

Existence of conic singularities on the boundary $\partial M$ of $M$ is
analogously defined and are allowed.
\end{definition}

In the following we consider Morse foliations $\mathcal{F}$ of codimension one
and of class $C^{\infty}$ of the sphere $S^{3}.$ If the foliated 3-manifolds
are not homeomorphic to $S^{3}$ we will always mention it.

Each leaf of $\mathcal{F}$ is one of the following three types:

\begin{itemize}
\item a regular leaf which is an orientable surface;

\item a leaf with finitely many \textit{conic singularities }$s_{i};$

\item a \textit{center.}
\end{itemize}

A leaf containing a conic singularity will be called \textit{singular leaf}.

We always assume that $\mathcal{F}$ is orientable which means that, if we
remove from $S^{3}$ all the singularities of $\mathcal{F}$ then we take a
3-manifold $N$ such that $\mathcal{F}_{|N}$ is an orientable foliation of
codimension one in the usual sense. Notice finally that the space of Morse
foliations of $S^{3}$ is equipped with the $C^{1}$-topology.

Regular foliations of the sphere $S^{3}$ are extensively studied and they are
dominated by Novikov's theorem \cite{[Novikov]} which asserts that each
orientable foliation of codimension one of $S^{3}$ has a Reeb component and
thus non-compact leaves homeomorphic to $R^{2}.$ For Morse foliations the
situation is (at least seemingly) completely different. We will construct
below Morse foliations whose all leaves are compact, simply connected or not,
as well as, foliations without any compact leaf. However, we will see that,
except the case where all the leaves are simply connected, two kinds of
components appear. These components will be referred to as \textit{truncated
Reeb} or \textit{Morse} \textit{components}.

Let $s$ be a conic singularity and we will give below the definition of
\textit{Morse modification} around $s.$ Notice that this modification is local
in the sense that it takes place in a trivial neighborhood $U$ of $s.$

Consider a neighborhood $U$ of $s$ homeomorphic to $R^{3}$ such that the
leaves of the induced foliation $\mathcal{F}_{|U}$ are the level sets of a
Morse function $f:U\rightarrow R$ of index 1 or 2. Let $S_{t}=f^{-1}(t).$
Therefore, we have locally three types of level surfaces:

\noindent(1) $S_{t}$ is a disjoint union of two open discs $D_{t},$
$D_{t}^{\prime},$ for each $t\in(-\infty,0);$

\noindent(2) $S_{t}$ is a cylindrical leaf homeomorphic to $S^{1}\times(0,1),$
for each $t\in(0,\infty),$

\noindent(3) $S_{0}$ contains $s$ and will be referred to as a \textit{double
cone containing }$s,$ see Figure 1.

\begin{definition}
Let $U$ be a neighborhood of $s$, as above. By a $\emph{Morse}$
$\emph{modification}$\textit{\ of }$\mathcal{F}$\textit{\ around }$s$ we mean
a local transformation which produces a new Morse foliation on $S^{3},$ such
that one of the following two modifications $(A)$ or $(B)$ is performed.

$(A)$ For some $t_{0}\in(-\infty,0)$ we consider points $p_{t_{0}}\subset$
$D_{t_{0}}$ and $p_{t_{0}}^{\prime}\subset D_{t_{0}}^{\prime}$ and we glue
$D_{t_{0}}$ and $D_{t_{0}}^{\prime}$ by identifying $p_{t_{0}}$ with
$p_{t_{0}}^{\prime}.$ Then, a double cone $S_{t_{0}}$ is constructed
containing as conic singularity the point $p_{t_{0}}\equiv p_{t_{0}}^{\prime
}.$ Each $S_{t},$ $t\in(-\infty,$ $t_{0})$ is a disjoint union of two open
discs and each leaf $S_{t},$ $t\in(t_{0},$ $\infty)$ is cylindrical.

$(B)$ For some $t_{0}\in(0,\infty)$ we consider an essential simple curve
$c_{t_{0}}\subset S_{t_{0}}.$ By shrinking $c_{t_{0}}$ to a point $s_{t_{0}},$
the leaf $S_{t_{0}}$ is transformed to a double cone containing the point
$s_{t_{0}}$ as a conic singularity. Each $S_{t},$ $t\in(-\infty,$ $t_{0})$ is
a disjoint union of two open discs and each leaf $S_{t},$ $t\in(t_{0},$
$\infty)$ is cylindrical.

Obviously a Morse modification can be defined in any 3-manifold $M$ equipped
with a Morse foliation $\mathcal{F}$ and the modification $(A)$ is the inverse
of $(B)$ and vice versa. \medskip
\end{definition}

\begin{figure}[ptb]
\begin{center}
\includegraphics[scale=0.87]
{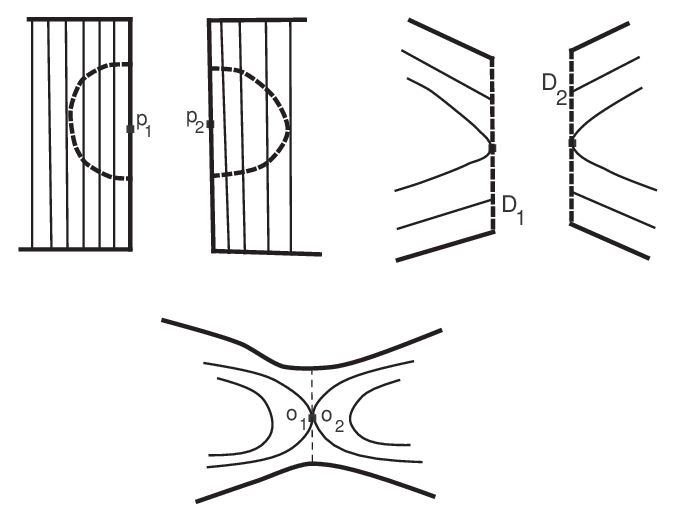}
\end{center}
\caption{The connected sum of foliations.}%
\label{sxima2}%
\end{figure}

\begin{remark}
\label{Morse modification}Obviously, modulo Morse modifications, we may always
assume that a leaf of $\mathcal{F}$ does not contain more than one conic
singularity. More precisely, the following property holds:

\begin{itemize}
\item \noindent For every Morse foliation $\mathcal{F}$ and every neighborhood
$\mathcal{U}$ of $\mathcal{F}$, with respect to the $C^{1}$- topology, there
is a Morse foliation $\mathcal{F}^{\prime}$ in $\mathcal{U}$ such that every
leaf of $\mathcal{F}^{\prime}$ does not contain more than one conic singularity.
\end{itemize}

In what follows and in order to avoid technical difficulties, we always assume
that each leaf of a Morse foliation $\mathcal{F}$ contains at most one conic
singularity. It is important to notice here that this assumption does not
affect the dynamic behavior of $\mathcal{F}$.\bigskip
\end{remark}

\subsection{The connected sum of foliations (see \cite{[Ros-Rou]})\medskip}

Let $D$ be an embedded 2-disc in $S^{3}$ which does not contain any
singularity of $\mathcal{F}.$ The definition of being $D$ in \textit{general
position} with respect to $\mathcal{F}$ is standard for regular foliations of
codimension one (see for example \cite{[CaCo]}, Ch. 7, Def. 7.1.5) and it is
readily generalized for Morse foliations.\smallskip

Now, the connected sum of foliated 3-manifolds will be defined.

Let $(M_{1},\mathcal{F}_{1})$ and $(M_{2},\mathcal{F}_{2})$ be two compact
foliated 3-manifolds with non-empty boundary. Here we assume that each
$\mathcal{F}_{i}$ is either a Morse foliation or a regular 2-dimensional
foliation tangent to $\partial M_{i}.$ Let $p_{i}\in \partial M_{i}$ and let
$B_{i}$ be a \textit{\ }neighborhood of $p_{i}$ in $M_{i}$ such that:

(1) $B_{i}$ is homeomorphic to a closed 3-ball and there do not exist conic
singularities or centers in $B_{i};$

(2) $\partial B_{i}$ is the union of two 2-discs $D_{i}$ and $E_{i}$ such
that: $D_{i}\cap E_{i}=\partial D_{i}=\partial E_{i}$ and $E_{i}%
\subset \partial M_{i};$

(3) $D_{i}$ is in general position with respect to $\mathcal{F}_{i}$ and
$\mathcal{F}_{i|D_{i}}$ is a foliation by concentric circles of center
$o_{i}.$

The induced foliation $\mathcal{F}_{i|D_{i}}$ by concentric circles will be
referred to as a \textit{trivial foliation} on the disc $D_{i}.$

From Reeb stability theorem \cite{[Reeb]} we deduce that $\mathcal{F}_{i}$
induces in $B_{i}-\{o_{i}\}$ a product foliation by 2-discs. The neighborhood
$B_{i}$ will be referred to as a \textit{trivially foliated neighborhood} of
$p_{i}.$ If $N_{i}=(M_{i}-B_{i})\cup D_{i},$ we may form the connected sum
$M_{1}\#M_{2}$ by identifying $N_{1}$ with $N_{2},$ via a diffeomorphism
$f:D_{1}\rightarrow D_{2}$ which sends $o_{1}$ to $o_{2}$ and the leaves of
$\mathcal{F}_{1|D_{1}}$ onto the leaves of $\mathcal{F}_{2|D_{2}}.$ In this
way, a Morse foliation $\mathcal{F}_{1}\# \mathcal{F}_{2}$ is defined on
$M_{1}\#M_{2};$ this foliation has one more conic singularity at the point
$o_{1}\equiv o_{2}$ and is tangent to $\partial(M_{1}\#M_{2}).$ Henceforth,
the connected sum of $(M_{1},\mathcal{F}_{1})$ and $(M_{2},\mathcal{F}_{2})$
will be denoted by $(M_{1}\#M_{2},\mathcal{F}_{1}\# \mathcal{F}_{2}).$
Obviously $\partial(M_{1}\#M_{2})=\partial M_{1}\# \partial M_{2}$, where
$\partial M_{1}\# \partial M_{2}$ denotes the connected sum of surfaces
$\partial M_{1}$ and $\partial M_{2}.$ In Figure 2 and in in a 2-dimensional
slice, the steps of the operation of connected sum are depicted. This
operation is defined in \cite{[Ros-Rou]} by means of equations.

\begin{figure}[ptb]
\begin{center}
\includegraphics[scale=0.66]
{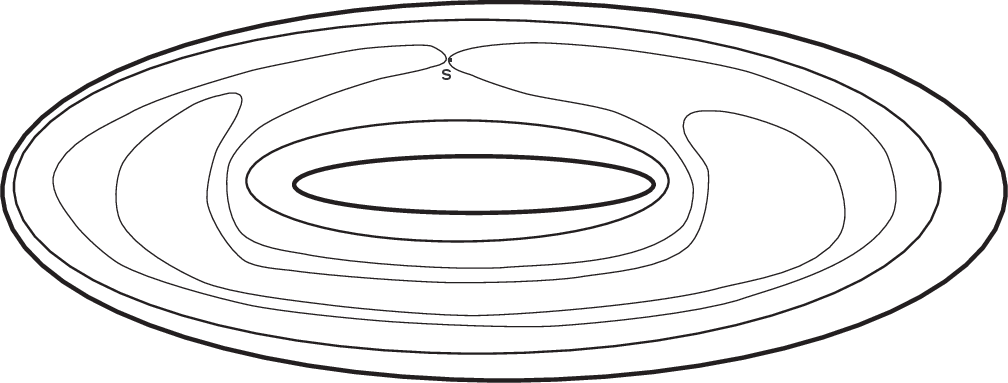}
\end{center}
\caption{A Morse component.}%
\label{sxima3}%
\end{figure}\bigskip

\subsection{Examples of Morse foliations}

\medskip

Below we will give examples of Morse foliations of compact
3-manifolds.\smallskip

(1) We may foliate the 3-dimensional ball $B^{3}$ by concentric spheres. Let
denote this foliation by $\mathcal{S}.$ This is a Morse foliation with one
center. By gluing two such 3-balls along their boundaries, we take a Morse
foliation $\mathcal{F}$ on $S^{3}$ which has two centers and all others leaves
are 2-spheres.

Similarly to the previous example, Morse foliations of $S^{3}$ can be
constructed whose all leaves are compact and simply connected. For, we form
the connected sum of finitely many $(B_{i},\mathcal{S}_{i}),$ where each
$B_{i}$ is a 3-ball and each $\mathcal{S}_{i}$ is a Morse foliation by
concentric spheres. Let denote by $(\overline{B},\mathcal{S})$ the resulting
foliated manifold. Obviously $\overline{B}$ is a 3-ball and each leaf of
$\mathcal{S}$ is compact and simply connected. Gluing two such copies
$(\overline{B}_{1},\mathcal{S}_{1}),$ $(\overline{B}_{2},\mathcal{S}_{2})$, by
identifying the boundaries $\partial \overline{B}_{1}$ and $\partial
\overline{B}_{2}$ via a diffeomorphism, we take a foliation $\mathcal{C}$ on
$S^{3}$ whose all leaves are compact and simply connected. The foliation
$\mathcal{C}$ will be referred to as a \textit{Morse foliation of }$S^{3}%
$\textit{\ with simply connected leaves}. $\bigskip$

(2) Let $(T,\mathcal{R})$ be a Reeb component, i.e. $T$ is homeomorphic to the
solid torus $S^{1}\times D^{2};$ all leaves in the interior of $T$ are planes
and the boundary $\partial T$ is a toral leaf. Considering two Reeb components
$(T_{1},\mathcal{R}_{1})$ and $(T_{2},\mathcal{R}_{2})$ we form the connected
sum $(T_{1}\#T_{2},\mathcal{R}_{1}\# \mathcal{R}_{2}).$ Obviously
$P=T_{1}\#T_{2}$ is a solid pretzel and $\mathcal{G}=\mathcal{R}_{1}\#
\mathcal{R}_{2}$ is Morse foliation whose all the leaves in the interior of
$P$ are non-compact. Finally, considering two such copies $(P,\mathcal{G})$
and $(P^{\prime},\mathcal{G}^{\prime})$ of foliated pretzels and gluing them
properly along their boundaries we take a Morse foliation $\mathcal{F}$ on
$S^{3}$ with a single compact leaf, say $L_{0},$ which is a surface of genus 2.

Now we may construct a Morse foliation on $S^{3}$ without compact leaves (see
\cite{[Ros-Rou]}, Prop. 3.1). For this, we consider a 2-sphere $S$ embedded in
$S^{3}$ such that:

$a)$ $S=D\cup D^{\prime}$, where $D,D^{\prime}$ are 2-discs and $D\cap
D^{\prime}=c$ is a separating curve in $L_{0};$

$b)$ $D\subset P,$ $D^{\prime}\subset P^{\prime}$ and the induced foliation
$\mathcal{G}_{|D}$ (resp. $\mathcal{G}_{|D^{\prime}}^{\prime})$ is a trivial
foliation by concentric circles.

Let $U$ be a neighborhood of $c$ in $S^{3},$ homeomorphic to $S^{1}%
\times \lbrack-1,1]\times \lbrack-1,1]$ such that $\mathcal{F}_{|U}$ is a
trivial product foliation of the form $S^{1}\times \lbrack-1,1]\times \{t\},$
$t\in \lbrack-1,1]$ with $S^{1}\times \lbrack-1,1]\times \{0\} \subset L_{0}$ and
$S^{1}\times \{0\} \times \lbrack-1,1]\subset S.$ Let denote by $L_{0}^{+}$,
$L_{0}^{-}$ the connected components of $L_{0}-c.$ By a small perturbation of
$\mathcal{F}$ in $U$ we may destroy the compact leaf $L_{0}$ by leading
$L_{0}^{+}$ inside $P^{\prime}$ and $L_{0}^{-}$ inside $P,$ see
\cite{[Ros-Rou]} for a formal description of this perturbation. In this way we
take a foliation without compact leaves.

(3) Consider a Morse foliation $\mathcal{M}$ on the solid torus $T$ such that
$\mathcal{M}$ has one center $c$ and one conic singularity $s.$ The regular
leaves of $\mathcal{M}$ are either spheres or tori parallel to $\partial T.$
Moreover, $\mathcal{M}$ has a singular leaf $C$ containing the conic
singularity $s$ and $C-s$ is homeomorphic to 2-sphere minus two points. The
foliated manifold $(T,\mathcal{M})$ will be called a \textit{Morse component},
see Figure 3. The singular leaf $C$ will be called a \textit{pseudo-torus.
}Let denote by $V$ the component of $T-C$ which is foliated by spherical
leaves. By abusing the language, we will say that the singular leaf $C$ with
all the spherical leaves of $V$ define also a Morse component on the closure
$\overline{V}$ of $V;$ if we want to be more precise this component will be
called a \textit{pseudo-Morse component.}

Now, considering two copies of $(T_{i},\mathcal{M}_{i})$, $i=1,2$ of Morse
components we form the connected sum $(T_{1}\#T_{2},\mathcal{M}_{1}\#
\mathcal{M}_{2}).$ Obviously $T_{1}\#T_{2}$ is a solid pretzel and
$\mathcal{M}_{1}\# \mathcal{M}_{2}$ has two centers and two conic
singularities. Obviously, we may construct a Morse foliation on $S^{3}$
containing Morse components.

\begin{definition}
Let $(T,\mathcal{R})$ (resp. $(T,\mathcal{M}))$ be a Reeb component (resp.
Morse component). We remove the interior of a trivially foliated neighborhood
$B$ of some point $p\in \partial T$ and let $G=T-int(B).$ Then $(G,\mathcal{R}%
_{|G})$ (resp. $(G,\mathcal{M}_{|G}))$ will be called a \emph{truncated Reeb
(resp. truncated Morse)} component.\noindent \smallskip \smallskip
\end{definition}

\subsection{Elimination of trivial pairs of singularities\medskip}

Now, we will describe a method that allow us to modify a Morse foliation
$\mathcal{F}$ by transforming the induced foliation $\mathcal{F}_{|B}$ in
certain 3-balls $B$ which are trivially foliated by concentric spheres. In
this manner the Morse foliation $\mathcal{F}$ is transformed to a Morse
foliation $\mathcal{F}^{\prime}$ which is simpler than $\mathcal{F}$. On the
other hand, proving Theorem \ref{main theorem} for $\mathcal{F}^{\prime}$ we
may see that it remains valid for $\mathcal{F}$.

\begin{figure}[ptb]
\begin{center}
\includegraphics[scale=0.66]
{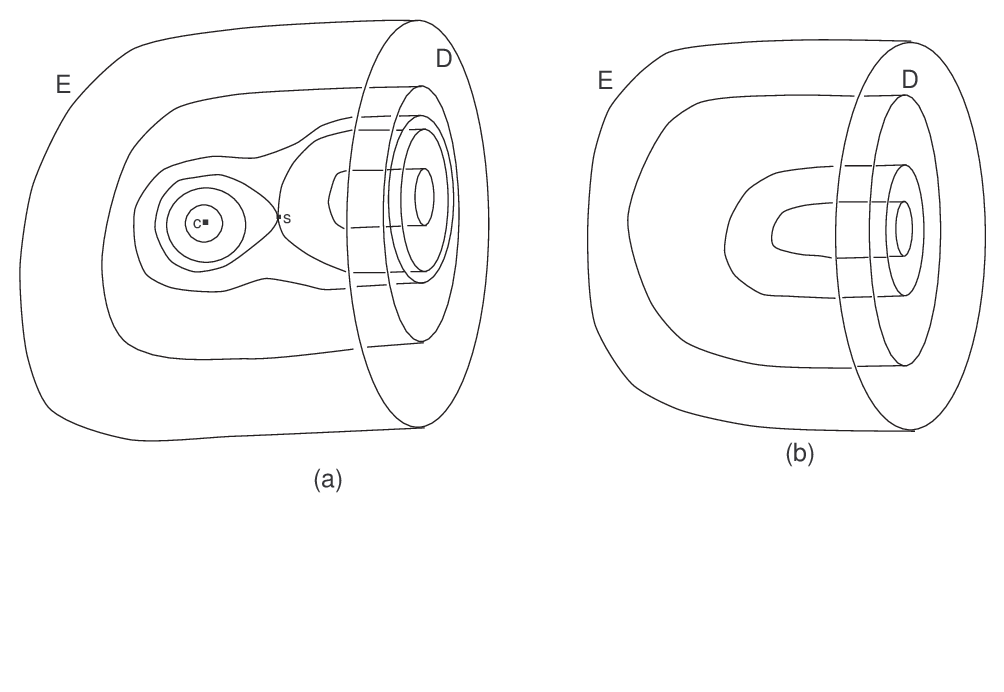}
\end{center}
\caption{Elimination of trivial pair of singularities.}%
\label{sxima4}%
\end{figure}

First we will eliminate some pairs of singularities.

If $c$ is a center of $\mathcal{F}$ then around $c$ there are spherical
leaves. So, we assume that there exists a $C^{\infty}$-map $g:[0,1]\times
S^{2}\rightarrow S^{3}$ such that:

$1)$ $g(\{0\} \times S^{2})=c,$

$2)$ $g:(0,1]\times S^{2}\rightarrow S^{3}$ is an embedding and $g(\{t\}
\times S^{2}),$ $t\in(0,1)$ are parallel spherical leaves,

$3)$ $g(\{1\} \times S^{2})$ contains a conic singularity $s.$

Let $S=g(\{1\} \times S^{2})$ and let $V$ be the component of $S^{3}-S$
containing the center $c.$

\begin{definition}
\label{trivial pair singularities} We will say that the pair $(c,s)$ defines a
\textit{trivial pair of singularities and } $\overline{V}=V\cup S$ will be
called a \textit{trivial bubble}.
\end{definition}

Obviously, we may find discs $D$ and $E$ embedded in $S^{3}$ such that:

$(i)$ $\mathcal{F}_{|D}$ is trivial;

$(ii)$ $E$ is contained in a leaf of $\mathcal{F}$, $E\cap D$ is a leaf of
$\mathcal{F}_{|D}$ and $E\cup D$ bounds a 3-ball $B$ containing $\overline{V};
$

$(iii)$ $\mathcal{F}_{|B}$ consists of a center $c,$ of spherical leaves, of
discs parallel to $E$ and of a singular leaf containing $s,$ see Figure 4(a).

Keeping the discs $D$ and $E$ fixed we may replace the foliation
$\mathcal{F}_{|B}$ by a foliation by parallel discs such that $\mathcal{F}%
_{|D}$ stays invariant, see Figure 4(b). In this way we may remove the
singularities $c$ and $s.$

Notice here that the elimination of a trivial pair of singularities is also
defined in Proposition 6.2 of \cite{[Rosati]}.

Finally, it is not difficult to see that any Morse foliation of $S^{3}$ with
compact simply connected leaves, is led to a foliation with two centers and
spherical leaves by removing successively all trivial pairs of singularities.
In fact, eliminating all these trivial pairs we obtain a foliation with all
its leaves simply connected and without conic singularities. Thus our
assertion follows.

\subsection{Elimination of bubbles}

In this paragraph we assume that $\mathcal{F}$ does not have trivial pair of singularities.

Contrary to the regular foliations, if we allow singularities we may construct
in the interior of a 3-ball $B$ \textit{non-trivial} Morse foliations which
are tangent to the boundary $\partial B$ i.e. $\mathcal{F}_{|B}$ does not
consist of concentric 2-spheres parallel to $\partial B.$

Let $U$ be a local chart of $\mathcal{F}$ i.e. the closure $\overline{U}$ of
$U$ is homeomorphic to $D^{2}\times \lbrack0,1]$ and the leaves of
$\mathcal{F}_{|\overline{U}}$ are of the form $D^{2}\times \{t\},$ $t\in
\lbrack0,1].$ To the foliation $\mathcal{F}$ we may perform the following
modification. In the interior of some $D_{t_{0}}=D^{2}\times \{t_{0}\}$ we pick
a point, say $s,$ and we join to $D_{t_{0}}$ a 2-sphere $S\subset U$ so that
$S\cap D_{t_{0}}=\{s\}.$ Obviously, $S$ defines a 3-ball $B$ in $U.$ In the
interior of $B$ we consider a non-trivial Morse foliation. We also assume that
all the other discs $D_{t}$ with $t\neq t_{0}$ remain invariants under an
isotopy taking place in $U.$ In this way, a new Morse foliation $\mathcal{F}%
^{\prime}$ is constructed which has $s$ as a conic singularity; in
\textbf{\ }Figure 5(a) and 5(b) a 2-dimensional section of the whole process
is drawn. The 3-ball $B$ will be called a (non-trivial) \textit{bubble} of
$\mathcal{F}^{\prime},$ provided that the foliation $\mathcal{F}_{|B}^{\prime
}$ is non-trivial and we will say that $\mathcal{F}^{\prime}$ is constructed
from $\mathcal{F}$ by adding a bubble to the leaf $L_{t_{0}}.$

We will define now the notion of bubble of $\mathcal{F}$ in a more general way.

\begin{definition}
Let $S$ be a subset of a singular leaf $L_{0}$ of $\mathcal{F}$ such that $S$
is homeomorphic to $S^{2}$ and $S$ contains a conic singularity $s.$ Let $W$
be the closure of the component of $S^{3}-S$ which does not contain $L_{0}-S$
and we assume that $\mathcal{F}_{|W}$ is non-trivial. The 3-ball $W$ will be
called a \emph{bubble} of $\mathcal{F}$ and the interior $Int(W)$ of $W$ will
be called \textit{the interior of the bubble. The 2-sphere }$S$ will be called
the boundary of $W$ and it will be said that $S$ defines the bubble $W.$
\end{definition}

\begin{figure}[ptb]
\begin{center}
\includegraphics[scale=0.71]
{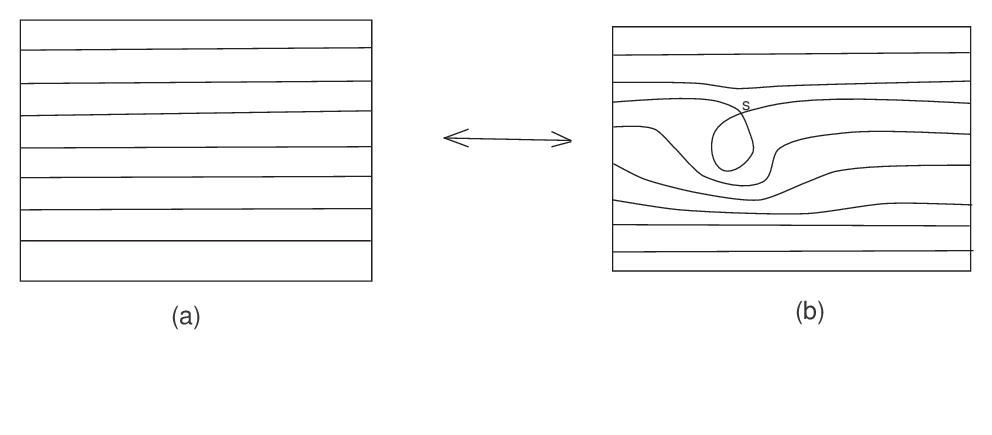}
\end{center}
\caption{Construction of bubbles in a Morse foliation.}%
\label{sxima5}%
\end{figure}

Now $W$ can be eliminated in two steps. First we foliate $Int(W)$ by
concentric 2-spheres by adding a center $c.$ We take in this way a foliation
$\mathcal{F}^{\prime}$ such that the pair of singularities $(s,c)$ forms a
trivial pair. Hence, in a second step, we can eliminate $(s,c)$ as it is
described above. If $\mathcal{F}^{\prime \prime}$ is the resulting foliation,
then we will say that $\mathcal{F}^{\prime \prime}$ results from $\mathcal{F}$
by eliminating (or removing) the bubble $W.$ Below by the term bubble we mean
always a non-trivial one.

It is worthy to notice that the procedure of eliminating the bubbles of
$\mathcal{F}$ creates a new Morse foliation $\mathcal{F}_{0}$, which is not
well defined in the sense that $\mathcal{F}_{0}$ depends on the order in which
the bubbles are eliminated. For instance, let $W_{1},$ $W_{2}$ be two bubbles
such that $S_{2}=\partial W_{2}$ is contained in the interior of $W_{1}$ and
$S_{1}=\partial W_{1}$ is contained in the interior of $W_{2}.$ If we
eliminate $W_{1}$ (resp. $W_{2})$ obtaining a foliation $\mathcal{F}_{1}$
(resp. $\mathcal{F}_{2})$ then the bubble $W_{2}$ (resp. $W_{1})$ disappears
also. Obviously $\mathcal{F}_{1}\neq$ $\mathcal{F}_{2}.$ However, we will show
that the order in which the bubbles are eliminated does not affect our main
results as they are stated in Theorems \ref{main theorem} and \ref{stability}.

The following proposition can be easily proven.

\begin{proposition}
\label{existence of Morse components}Let $\mathcal{F}$ be a Morse foliation
containing and assume that $\mathcal{F}$ does not have bubbles and admits at
least one non-spherical leaf. If $\mathcal{F}$ has at least one spherical leaf
then $\mathcal{F}$ contains a pseudo-toral leaf provided that each leaf of
$\mathcal{F}$ contains at most one conic singularity.
\end{proposition}

\begin{proof}
By Reeb stability theorem three cases can occur. Either $\mathcal{F}$ has two
centers and all its leaves are homeomorphic to $S^{2}$ or $\mathcal{F}$ has a
bubble or $\mathcal{F}$ has a pseudo-toral leaf $T.$ By hypothesis the first
and second cases are excluded. Therefore our result follows immediately.
\end{proof}

\medskip

\begin{figure}[ptb]
\begin{center}
\includegraphics[scale=0.86]
{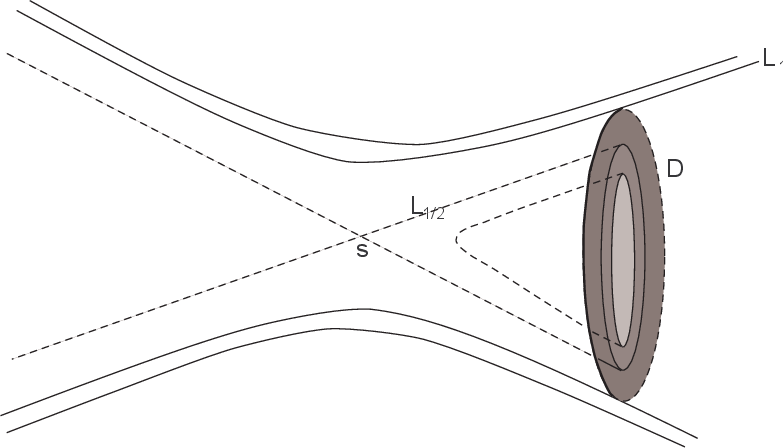}
\end{center}
\caption{A perfect disc associated to $s.$}%
\label{sxima6}%
\end{figure}

\subsection{Truncated bubbles\smallskip \medskip}

In the following we will introduce the notion of \textit{truncated bubble}.
The elimination of such bubbles is also necessary in order to simplify further
and understand better a Morse foliation.

We start with the following definitions.

\begin{definition}
\label{perfect position}Let $s$ be a conic singularity of $\mathcal{F}$ and
let $D$ be a 2-disc such that (see Figure 6):

(1) $\mathcal{F}_{|D}$ is a trivial foliation. The leaves of $\mathcal{F}%
_{|D},$ which are simple closed curves (s.c.c.), are denoted by $c_{t},$
$t\in(0,1].$

(2) If we denote by $L_{t}$ the leaf of $\mathcal{F}$ containing $c_{t},$
$t\in(0,1],$ then $c_{t}$ is the boundary of a 2-disc $D_{t}\subset L_{t}$
with $D_{t}\cap D=c_{t}$ for each $t\in(0,1/2].$ Furthermore, $s\in
Int(D_{1/2})$ while $D_{t}$ does not contain any conic singularity for
$t\in(0,1/2).$

(3) The curves $c_{t}$ are not homotopic to a constant in $L_{t}$ for each
$t\in(1/2,1].$

(4) The curve $c_{1}$ has trivial holonomy in its both sides.

If $D$ satisfies the above conditions (1)-(4) then $D$ will be called a
\textit{\  \emph{perfect disc associated to }}$s$ or simply a \emph{perfect
disc. }Also, $s$ will be called the\emph{\ singularity corresponding to the
perfect disc} $D.$

Hence, to each perfect disc corresponds a conic singularity and vice-versa.
\end{definition}

Now we are able to give the following definition.

\begin{definition}
\label{truncated bubble}Let $W$ be a 3-ball in $S^{3}$ and let $D_{i},$
$i=1,...,n$ be disjoint 2-discs in $\partial W$ such that:

(1) For each $i,$ $D_{i}$ is a perfect disc with corresponding conic
singularity $s_{i},$ such that $s_{i}\neq s_{j}$\textbf{\ }for\textbf{\ }%
$i\neq j.$

(2) If $F=\partial W-(\cup_{i}D_{i})$ then $F\mathcal{\ }$does not contain any
conic singularity.

(3) Assuming that all $s_{i}$ are contained in $W,$ there does not exist a
bubble $S$ in $W.$

Then $W$ will be called a truncated bubble and the set $\partial_{t}W=\partial
W-(\cup_{i}D_{i})$ will be called the tangent boundary of $W.$
\end{definition}

\begin{figure}[ptb]
\begin{center}
\includegraphics[scale=0.70]
{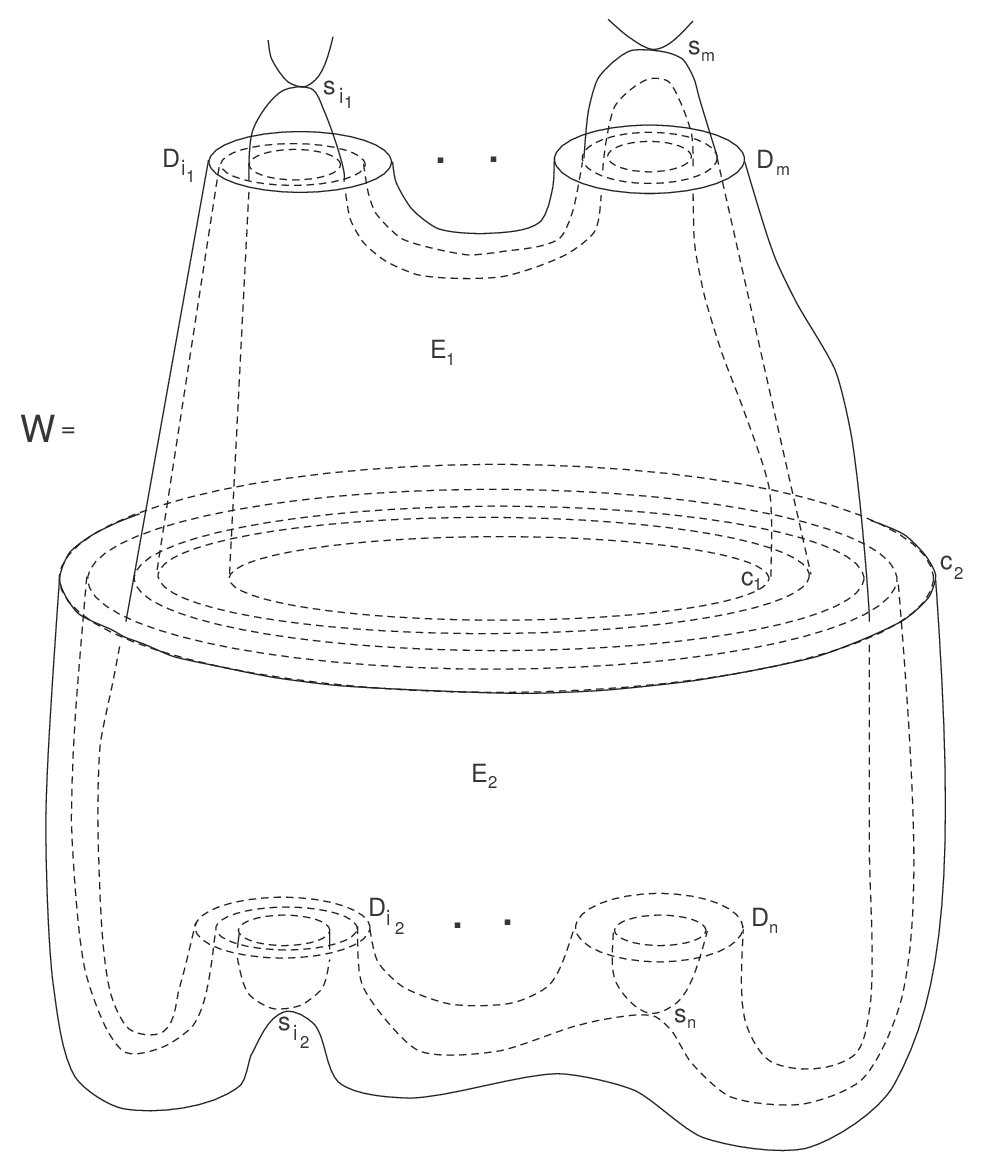}
\end{center}
\caption{A truncated bubble.}%
\label{sxima7}%
\end{figure}

Let $W$ be a truncated bubble and let $\overrightarrow{n}$ be the unit normal
vector to $\mathcal{F}$. The condition (3) in the Definition above ensures
that there exists at least one perfect disc $D_{k}\subset \partial W$ and one
perfect disc $D_{l}\subset \partial W$ such that $\overrightarrow{n}$ is
directed inward $D_{k}$ and outward $D_{l}.$ Indeed, if our statement was not
true, then we may deduce from Reeb's stability theorem \cite{[Reeb]} that a
bubble would exist in $W.$ Therefore the discs $D_{i}$ are separated in the
discs $D_{1},...,D_{m}$ and the discs $D_{m+1},...,D_{n},$ such that
$\overrightarrow{n}$ is directed inward on $D_{i}$ for $i=1,..,m$ and outward
on $D_{j}$ for $j=m+1,...,D_{n},$ see Figure 7.

Apparently, we may isotope each $D_{i}$ to a perfect disc, keeping $\partial
D_{i}$ on the same leaf, so that all $s_{i}$ are outside $W,$ see Figure 7.
Thus, in Definition \ref{truncated bubble}, we have assumed that all $s_{i}$
are contained in $W$ so that the hypothesis (3) of the definition makes sense.

In order to eliminate the truncated bubble $W$\ we may perform appropriate
Morse modifications around each conic singularity $s_{i}$ so that the
truncated bubble $W$\ disappears and in its place two bubbles $S_{1}$ and
$S_{2}$ are created. Notice that $S_{1}$ should be contained in the interior
of $S_{2}$ and $S_{2}$ should be contained in the interior of $S_{1}.$ Now, we
may eliminate the created bubbles as previously. That is, eliminating the
bubble $S_{1}$ (resp. $S_{2})$ the bubble $S_{2}$ (resp. $S_{1})$ is also
eliminated since $S_{2}$ (resp. $S_{1})$ is contained in the interior of
$S_{1}$ (resp. $S_{2}).$

In a next paragraph we will need the following property of a truncated bubble.
There is one disc $E_{1}$ which contains a conic singularity $s_{i_{1}},$
$1\leq i_{1}\leq m$ and one disc $E_{2}$ which contains a conic singularity
$s_{i_{2}},$ $m+1\leq i_{2}\leq n$ such that, if $\partial E_{1}=c_{1}$ and
$\partial E_{2}=c_{2}$ then the curves $c_{1}$ and $c_{2}$ bound an annulus
$C$ contained in a neighborhood of $\partial_{t}W$ and $\mathcal{F}_{|C}$ is a
family of parallel closed leaves, see Figure 7.

\begin{remark}
\label{elimination of all}From the discussion above, if $\mathcal{F}$ is a
Morse foliation then we may construct a new Morse foliation $\mathcal{F}%
^{\prime}$ which does not contain neither trivial pair of singularities nor
bubbles nor truncated bubbles. In fact, first we remove all trivial pair of
singularities, second we eliminate all bubbles and third we eliminate all
truncated bubbles. In the second and third step of our procedure we remark
that new bubbles/truncated bubbles are not created. So $\mathcal{F}^{\prime}$
results after finitely many steps.\medskip
\end{remark}

\subsection{Haefliger structures\medskip}

Finally, in the next few lines of this paragraph we will relate Haefliger
structures with Morse foliations in order to indicate that the latter are
naturally considered.

A $C^{r}$-Haefliger structure $\mathcal{H}$ on a manifold $M$ can be
considered as a generalized foliation on $M$ with a certain type of
singularities, see \cite{[Haefliger]}, \cite{[Lawson]} for more details.
Suppose that $\mathcal{H}$ is a codimension $q\ $ $C^{r}$-Haefliger structure.
Then, we may associate to $\mathcal{H}$, a $q$-dimensional $C^{r}$-vector
bundle $E$ over $M,$ a section $i:M\rightarrow E$ and a $C^{r}$-foliation
$\mathcal{G}$ defined in a neighborhood of $i(M)$ and transverse to the
fibers. The triple $(E,M,i)$ is called the graph of $\mathcal{H}$ and
determines it.

In particular, if we consider a Haefliger structure $\mathcal{H}$ of
codimension one on a closed 3-manifold $M$, then generically, the section
$i:M\rightarrow E$ is in general position with respect to $\mathcal{G}$, which
implies that the contact points are finitely many singularities of a Morse
function. In this way, a codimension 1, Morse foliation \ $\mathcal{F}$ on $M$
is associated to $\mathcal{H}$. Furthermore, it follows that in this
particular case, Morse foliations defined above are generic in the space of
Haefliger structures. This fact that makes the class of Morse foliations quite
interesting. \begin{figure}[ptb]
\begin{center}
\includegraphics[scale=0.66]
{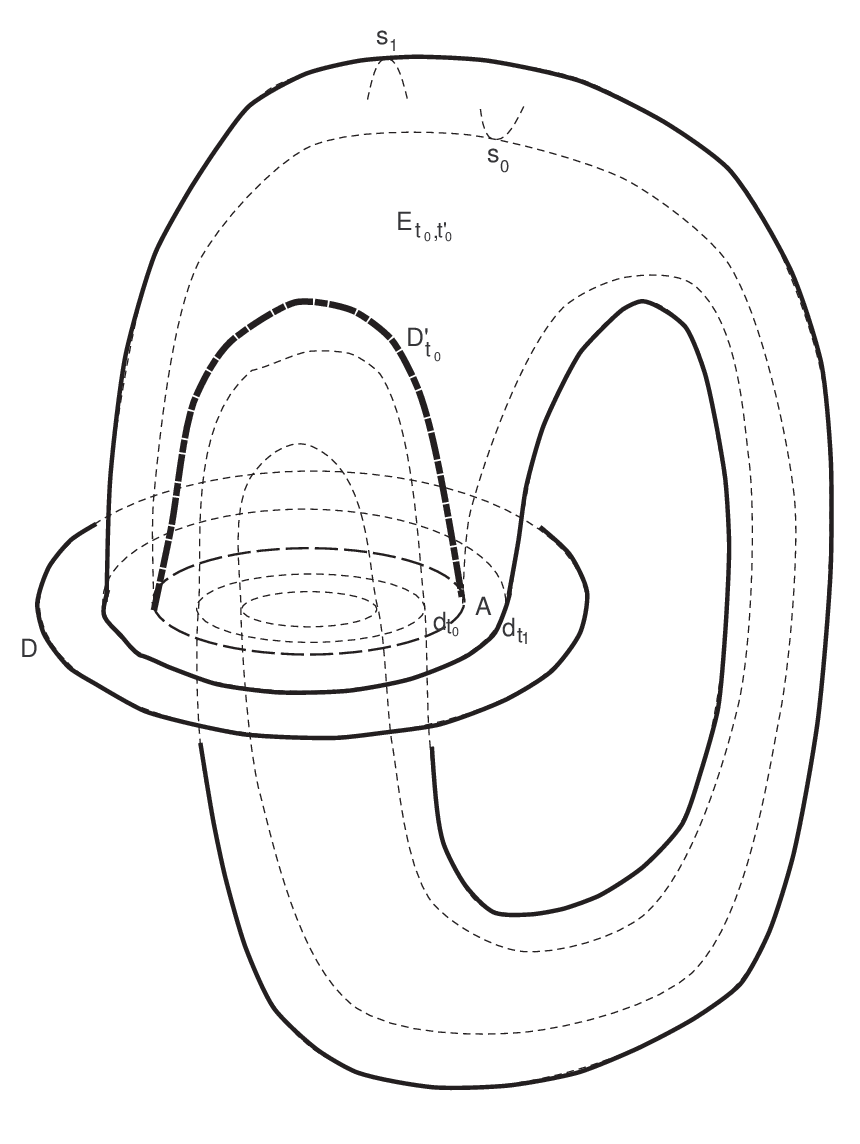}
\end{center}
\caption{Appearence of a truncated bubble.}%
\label{sxima8}%
\end{figure}

\section{Vanishing and anti-vanishing cycle}

In this section we recall the definition of vanishing cycle and we will
generalize Novikov's theorem for Morse foliations. In contrast to the
vanishing cycle the notion of \textit{anti-vanishing cycle} is introduced. We
always assume that each leaf contains at most one conic singularity and that
$\mathcal{F}$ does not contain trivial pair of singularities.

The following definition is borrowed from \cite{[Ros-Rou]} and is less general
than the initial one in \cite{[Novikov]} because we assume that the curves
$a_{t}$ below are simple.

\begin{definition}
\label{vanishing cycle}Let $f:S^{1}\times \lbrack0,1]\rightarrow S^{3}$ be a
smooth map such that:

(1) $f_{t}(S^{1})=a_{t}$ is a simple curve contained in a leaf $L_{t}$ for
each $t\in \lbrack0,1],$ where $f_{t}(x)=f(x,t);$

(2) for each $x\in S^{1},$ the arc $f_{t}(x)$ is transverse to $\mathcal{F};$

(3) the curve $a_{0}$ is not homotopic to a constant in $L_{0};$

(4) $a_{t}$ is nullhomotopic in $L_{t},$ $0<t\leq1.$

The curve $f_{0}(S^{1})=$ $a_{0}$ will be called a \emph{vanishing cycle} on
the leaf $L_{0}.$
\end{definition}

In order to state the next theorem we need the following definition.

\begin{definition}
\label{singular}A generalized Reeb component is a Morse foliation obtained by
adding finitely many bubbles to the leaves of a Reeb component.
\end{definition}

\begin{definition}
Let $\mathcal{R}$ be a Reeb component. If we add finitely many bubbles to the
leaves of $\mathcal{R}$ then we take a Morse foliation which will be called a
generalized Reeb component.
\end{definition}

Novikov in \cite{[Novikov]} proved that if $(M,\mathcal{F})$ is a closed,
orientable 3-manifold foliated by a regular, orientable foliation
$\mathcal{F}$ of codimension one and if some leaf of $\mathcal{F}$ contains a
vanishing cycle then $\mathcal{F}$ contains a Reeb component. Below we will
show that a similar result is also valid for Morse foliations of $S^{3}.$
Thus, we have:

\begin{theorem}
\label{Reeb component} Let $f:S^{1}\times \lbrack0,1]\rightarrow S^{3}$ be a
smooth map defining a vanishing cycle for the Morse foliation $\mathcal{F}$ of
$S^{3}.$ Then the foliation $\mathcal{F}$ contains a generalized Reeb component.
\end{theorem}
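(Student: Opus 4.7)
The plan is to adapt Novikov's original proof of his compact-leaf theorem to the Morse setting, carefully tracking what happens when the discs that cap off the $a_t$ encounter conic singularities; the expectation is that each such encounter contributes a bubble to the resulting Reeb-like component.

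For each $t \in (0,1]$ I would use condition (4) of Definition \ref{vanishing cycle} to choose an embedded disc $D_t \subset L_t$ with $\partial D_t = a_t$, arranged to depend smoothly on $t$ outside a finite singular set of values. The key object is the swept region $V = \bigcup_{t \in (0,1]} D_t$ together with the transverse annulus $C = f(S^1 \times [0,1])$. Following Novikov, an extremality argument on the complexity of $D_t$ and a study of the holonomy around $a_0$ show that as $t \to 0^+$ the discs accumulate on a compact leaf $L^\ast$; orientability of $\mathcal{F}$ combined with Reeb stability applied to the regular pieces then identify $L^\ast$ as a torus and the region bounded by $L^\ast$ that contains the $D_t$ as a solid torus $T$ foliated by planar leaves, exactly as in the regular case.

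The genuinely new step is to handle the finitely many values of $t$ at which $D_t$ meets a conic singularity. By Remark \ref{Morse modification} each leaf contains at most one conic singularity, and by the compactness of $[0,1]$ only finitely many values $0 < t_1 < \cdots < t_k \leq 1$ produce singular discs $D_{t_j}$. Near each $t_j$ one works in a trivial neighborhood $U_j$ of $s_j$ and invokes the Morse normal form: the level sets are pairs of disjoint discs on one side of $t_j$, cylindrical leaves on the other, and a double cone at $t_j$ itself. A local analysis then shows that the family $\{D_t\}$ degenerates by attaching a 2-sphere to $L_{t_j}$, i.e., by adding a bubble in the sense of Section 2.5; the no-trivial-pair hypothesis, which may be assumed by Remark \ref{elimination of all}, guarantees that each such bubble is non-trivial.

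Assembling the pieces, the solid torus $T$ is foliated by planar leaves with finitely many bubbles attached to its interior leaves, which matches the definition of a generalized Reeb component (Definition \ref{singular}). The main obstacle will be the local Morse analysis at each $t_j$: one has to verify that the family $D_t$ really degenerates via the double-cone splitting-or-merging mechanism, rather than being pinched off or escaping the local model. This should reduce to careful bookkeeping of the unit normal $\overrightarrow{n}$ to $\mathcal{F}$ across $s_j$ and of which side of the double cone is approached by $D_t$, in spirit analogous to the treatment of perfect discs and truncated bubbles in Section 2.6.
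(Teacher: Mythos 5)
Your central new step is precisely the point where the proof is missing, not merely deferred. You assert that whenever the capping family $D_{t}$ meets a conic singularity at some parameter $t_{j}$, the degeneration consists of ``attaching a $2$-sphere to $L_{t_{j}}$, i.e.\ adding a bubble,'' and you yourself flag the verification of this as ``the main obstacle.'' But nothing in the Morse local model forces this: when $D_{t}$ passes through the double cone the capping disc can split into two discs or merge with another plaque (so the topology of the cap changes with no bubble present anywhere), and the curve $a_{t_{j}}$ may bound only a pseudo-disc with a double point, in which case the singular leaf is a pseudo-torus bounding a Morse component rather than a sphere bounding a bubble. Ruling these out requires an argument, and that argument is exactly the content of the hard step. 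There is also a transfer problem you gloss over: invoking Remark \ref{elimination of all} to assume no trivial pairs modifies $\mathcal{F}$, yet the theorem is a statement about $\mathcal{F}$ itself, so any such preprocessing must be undone at the end with control on how it affects the component you found.

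The paper's proof avoids your local analysis entirely by a different order of operations. It first eliminates every bubble of $\mathcal{F}$ that does \emph{not} contain $a_{0}$ in its interior (this does not touch the map $f$), and in the resulting bubble-free situation it proves that the disc family $F_{t}$, propagated by the normal flow as in Lemma 1 of \cite{[Ros-Rou]}, extends over all of $(0,1]$: in the closedness step the only conceivable obstructions are a spherical leaf, excluded because such a leaf would bound a bubble which has been removed, or a pseudo-torus bounding a Morse component, excluded because then $f_{t+\varepsilon}(S^{1})$ would fail to be nullhomotopic in $L_{t+\varepsilon}$, contradicting condition (4) of the definition of vanishing cycle. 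Thus the family never degenerates at a singularity at all, Lemmata 2--6 of \cite{[Ros-Rou]} apply verbatim to produce a Reeb component, and re-inserting the previously eliminated bubbles is what yields the \emph{generalized} Reeb component of the statement. So the ``bubbles attached to the leaves'' in the conclusion come from this final re-insertion, not from encounters of the disc family with singularities as your proposal envisions; to salvage your route you would have to supply the case analysis at each $t_{j}$ that the paper's preprocessing renders unnecessary.
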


\begin{proof}
A first complete proof of this theorem for regular foliations is given in
\cite{[Ros-Rou]}. We follow it to prove Novikov's theorem for Morse
foliations. Below, we will sketch the steps of the proof.

Let $a_{0}=f(S^{1}\times \{0\})$ be the vanishing cycle defined by $f$ and let
$a_{t}$ be contained in a leaf $L_{t}$ for each $t.$ First of all, if $U_{i}$
is any bubble of $\mathcal{F}$ which does not contain $a_{0}$ in its interior
then we eliminate $U_{i}.$ In this way, the map $f$ of the theorem which
defines $a_{0}$ is not affected and furthermore, the lack of bubbles, which do
not contain $a_{0}$ in their interiors, allow us to apply the steps of proof
of Lemma 1 of \cite{[Ros-Rou]}. Thus, as in \cite{[Ros-Rou]}, the following
claim can be proved (see Lemma 1 in \cite{[Ros-Rou]}).

\textit{Claim 1}. There is an immersion $F:D^{2}\times(0,1]\rightarrow M$ such that,

(1) for each $t\in(0,1],$ $F_{t}(D^{2})$ is contained in a leaf $L_{t}$ and
$F_{t}$ is an embedding,

(2) $F_{t|\partial D_{t}}=f_{t},$

(3) for each $x\in D^{2}$, the curve $t\rightarrow F_{t}(x)$ is normal to the foliation.

As in \cite{[Ros-Rou]}, if this claim was not valid then a spherical leaf
would exist. Indeed, the set of $t\in(0,1]$ for which $F_{t}$ can be defined
to satisfy conditions (1)-(3) is open by Reeb's stability theorem. So, it
suffices to show that this set is also closed. Suppose $t_{0}\in(0,1)$ such
that $F_{t}$ is defined for $t_{0}<t\leq1$. The map $f_{t_{0}}:S^{1}%
\rightarrow L_{t_{0}}$ is an embedding and is nullhomotopic hence $f_{t_{0}}$
extends to an embedding $F_{t_{0}}:D^{2}\rightarrow L_{t_{0}}.$ Now displace
$F_{t_{0}}(D^{2})$ by the orthogonal trajectory field to the leaves $L_{t}$
for some $t>t_{0}.$ Let denote by $D_{t}$ this displaced disc in $L_{t}.$ Then
there are two possibilities, either $F_{t}(D^{2})=D_{t}$ or $F_{t}(D^{2})\cap
D_{t}=f_{t}(S^{1}).$ In the first case, $F$ is extended continuously on
$[t_{t_{0}},1].$ In the second case, by Proposition
\ref{existence of Morse components}, $L_{t}$ should be either a spherical leaf
or a pseudo-torus. But we have eliminated all bubbles which do not contain
$a_{0}$ in their interior. Hence $L_{t}$ cannot be a spherical leaf. On the
other hand, if $L_{t}$ was a pseudo-torus then for $\varepsilon>0$ arbitrarily
small the curve $f_{t+\varepsilon}(S^{1})$ should not be null homotopic in
$L_{t+\varepsilon}.$ This gives a contradiction which proves Claim 1.

The other steps of the proof, as they are expressed in Lemmata 2-6 of
\cite{[Ros-Rou]}, can be repeated verbatim and so Novikov's Theorem follows
for Morse foliations. Obviously, if we add again all eliminated bubbles we
obtain a generalized Reeb component.
\end{proof}

\begin{remark}
In the proof above, we have assumed that each $f_{t}$ is an embedding. For
regular foliations of $S^{3}$ the theorem is valid without this
extra-hypothesis \cite{[Novikov]}, \cite{[CaCo]}. Novikov's theorem could also
be proven in its full generality, but for the purpose of the present paper we
don't need the stronger version of the theorem.
\end{remark}

\begin{figure}[ptb]
\begin{center}
\includegraphics[scale=0.76]
{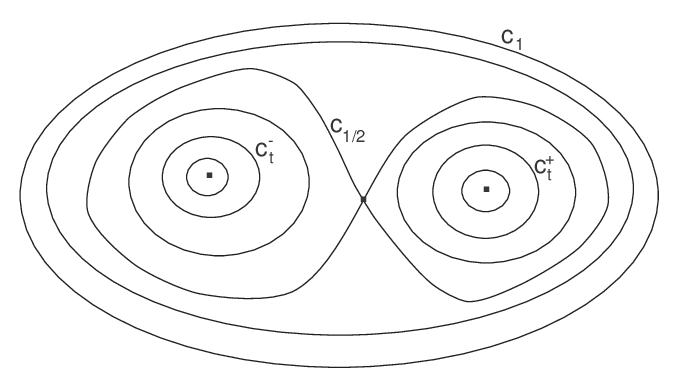}
\end{center}
\caption{The foliation induced on the disc $D.$}%
\label{sxima9}%
\end{figure}

\begin{definition}
\label{anti-vanishing}Let $f:S^{1}\times \lbrack0,1]\rightarrow S^{3}$ be an
embedding such that:

(1) $f_{t}(S^{1})=a_{t}$ is a simple curve contained in a leaf $L_{t},$ $0\leq
t\leq1,$ where $f_{t}(x)=f(x,t);$

(2) for each $x\in S^{1},$ the arc $f_{t}(x)$ is transverse to $\mathcal{F};$

(3) the curves $a_{t}$ are nullhomotopic in $L_{t},$ $0\leq t\leq1/2;$

(4) the curves $a_{t}$ are not homotopic to a constant in $L_{t},$
$1/2<t\leq1.$

The curve $f_{1/2}(S^{1})=$ $a_{1/2}$ will be called an $\emph{anti-vanishing}%
$ $\emph{cycle}$ on $L_{1/2}.$
\end{definition}

The following terminology will be used below.

Let $E$ be a 2-disc and $(p,q)$ be pair of points in the interior of $E.$ Let
$E^{\prime}$ be the space obtained by identifying $p$ with $q.$ Then
$E^{\prime}$ will be called a \textit{pseudo-disc} and the point $p\equiv q$
will be called a \textit{double point }of $E^{\prime}.$

We have the following lemma.

\begin{lemma}
\label{associated disc-gen Morse component} Let $f:S^{1}\times \lbrack
0,1]\rightarrow S^{3}$ be a smooth family of embedded circles $f_{t}(S^{1})$
which define an anti-vanishing cycle $f_{1/2}(S^{1})\subset$ $L_{1/2}.$ Then
either there exists a perfect disc $D$ associated to some conic singularity
$s$ such that all $f_{t}(S^{1}),$ $t\in \lbrack0,1]$ are leaves of
$\mathcal{F}_{|D}$ or, the appearance of $f_{1/2}(S^{1})$ implies the
existence of a truncated Morse component.
\end{lemma}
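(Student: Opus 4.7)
The plan is to adapt the disc-pushing construction from Claim 1 in the proof of Theorem~\ref{Reeb component} to the anti-vanishing setup, and then to read off either a perfect disc or a truncated Morse component from how the resulting disc family degenerates at $t=1/2$.

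First I would construct, for each $t\in[0,1/2]$, an embedded disc $D_t\subset L_t$ with $\partial D_t = a_t$, depending continuously on $t$. The disc $D_0$ exists since $a_0$ is nullhomotopic. The set of $t\in[0,1/2]$ for which the family extends is open by Reeb stability, and it is closed by the argument used in Claim 1 of Theorem~\ref{Reeb component}: the only obstructions to pushing are spherical leaves or pseudo-tori, and under our standing assumptions (no bubbles, no trivial pairs of singularities) any such occurrence already forces a Morse component via Proposition~\ref{existence of Morse components}, i.e.\ one is already in the second alternative of the statement. So the family extends over all of $[0,1/2]$ and the resulting map $F : D^2\times[0,1/2]\to S^3$ sweeps out a $3$-ball $V$ with $\partial V = D_0\cup A\cup D_{1/2}$, where $A=f(S^1\times[0,1/2])$.

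Next, $D_{1/2}$ must contain a conic singularity $s$: otherwise Reeb stability would let us push $D_{1/2}$ into $L_t$ for $t$ slightly greater than $1/2$ along the transverse arcs, producing a disc bounded by $a_t$, contradicting that $a_t$ is essential. By Remark~\ref{Morse modification} this $s$ is the unique conic singularity on $L_{1/2}$. In a trivial neighborhood $U$ of $s$ the local model is then forced: $L_t\cap U$ is a pair of disjoint discs for $t<1/2$ close to $1/2$ and a cylinder for $t>1/2$, for the reverse orientation would allow $D_{1/2}$ to be pushed across $s$ into one of the two local discs of $L_t\cap U$, once again giving a disc bounded by the essential curve $a_t$. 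Pushing $D_{1/2}\setminus\{s\}$ forward along transverse arcs then yields, for $t$ slightly above $1/2$, an annulus $Q_t\subset L_t$ bounded on the outside by $a_t$ and on the inside by a small meridian $b_t$ of the cylinder $L_t\cap U$.

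Finally I would distinguish two cases. If the disc family $D_t$ can be continued past $t=0$ into a shrinking family collapsing to a center of $\mathcal{F}$, then capping $A$ off with the corresponding small transverse disc produces a $2$-disc $D$ which, together with $s$, satisfies the four conditions of Definition~\ref{perfect position}, so $D$ is a perfect disc associated to $s$ containing all $a_t$ as leaves of $\mathcal{F}|_D$. If instead the family cannot be continued to a center, then the $3$-dimensional region bounded by $A$, $D_{1/2}$ and the meridian cap over $b_t$ (extended along the toral leaves $L_t$ for $t>1/2$) has the structure of a Morse component from which the trivially foliated $3$-ball $V$ has been removed, i.e.\ a truncated Morse component. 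I expect the main obstacle to lie in this last step: one must check carefully, using the local model at $s$, Reeb stability along the toral leaves $L_t$ for $t>1/2$, and the absence of bubbles and trivial pairs, that the enclosed region really is homeomorphic to a solid torus with the Morse-component foliation up to removing a trivially foliated bit, rather than a more complicated region with higher-genus leaves that does not quite fit the model.
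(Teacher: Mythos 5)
Your construction of the disc family $D_{t}$, $t\in[0,1/2]$, breaks down exactly where the second alternative of the lemma actually originates. You assert that the family "extends over all of $[0,1/2]$", so that $D_{1/2}$ is an embedded disc containing the conic singularity $s$ in its interior, the only obstructions being spherical leaves or pseudo-tori. This is not so: on a singular leaf an embedded nullhomotopic curve need not bound an embedded disc at all — it may bound only a pseudo-disc, a disc with two interior points identified at the conic point — and this degeneration (at $t=1/2$, or already at some singular $L_{t}$ with $t<1/2$) is neither a spherical-leaf nor a pseudo-torus obstruction. It is precisely the case the paper's own proof isolates: there one works directly in $L_{1/2}$, notes that $a_{1/2}$ bounds a simply connected subset $\Delta\subset L_{1/2}$ which is either a $2$-disc or a pseudo-disc, forces a conic singularity into $\Delta$ because $a_{t}$ is essential for $t>1/2$, and reads off the perfect disc from the disc case and the truncated (pseudo-)Morse component from the pseudo-disc case. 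Your argument silently assumes the disc case throughout, so the alternative "truncated Morse component" is never derived from the situation that actually produces it.

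Correspondingly, your final dichotomy — whether the family can be continued past $t=0$ to a shrinking family collapsing to a center — is not the relevant one, and the claim that failure of this continuation yields a truncated Morse component is unsupported (as you acknowledge). Worse, in the genuine truncated-Morse situation the curves $a_{t}$, $t<1/2$, lie on spherical leaves and $a_{0}$ can perfectly well be capped off by a trivially foliated transverse disc, so your criterion would place that situation in the first alternative, where the leaf-disc $D_{1/2}$ containing $s$ required by condition (2) of Definition \ref{perfect position} need not exist. Smaller points: the innermost leaf of a perfect disc degenerates to a tangency point of $\mathcal{F}_{|D}$, not to a center singularity of $\mathcal{F}$; condition (2) also requires the leaf-discs for $t\in(0,1/2)$ to contain no conic singularity, which your $D_{t}$ need not satisfy (the paper arranges this by localizing to a trivially foliated neighborhood of $s$); and your appeal to Proposition \ref{existence of Morse components} uses the no-bubble hypothesis, which is not among the standing assumptions of the section where the lemma is stated.
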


\begin{proof}
Let $f_{1/2}(S^{1})=a.$ Since $a$ is a simple closed curve, nullhomotopic in
$L_{1/2},$ it bounds a simply connected subset $\Delta \subset L_{1/2}$ which
is either a 2-disc or a pseudo-disc with a double point. Since $f_{t}(S^{1})$
is not nullhomotopic in $L_{t},$ $1/2<t\leq1$ we deduce the existence of a
conic singularity $s\in \Delta$ which in the latter case is the double point of
the pseudo-disc $\Delta.$ In the first case, and since we have assumed that
$L_{0}$ contains a single singularity (see Remark \ref{Morse modification}),
it follows that we may find a trivially foliated neighborhood $U$ of $s$ in
$S^{3}$ such that $\Delta \subset U.$ Therefore, there is in $U$ a perfect disc
$D$ associated to $s$ such that $f_{1/2}(S^{1})$ is a leaf of $\mathcal{F}%
_{|D}.$ In the second case, the existence of a pseudo-disc is equivalent with
the existence of truncated Morse/pseudo-Morse component.
\end{proof}

From the previous proof we have that each anti-vanishing cycle $a$ on a leaf
$L$ determines a conic singularity $s$ which will be referred to as the
\textit{singularity determined by the anti-vanishing cycle} $a.$ The
singularity $s$ is unique provided that each leaf contains at most one conic singularity.

Two anti-vanishing cycles $a,$ $a^{\prime}$ which determine the same conic
singularity will be referred to as \textit{equivalent}. Obviously $a$ and
$a^{\prime}$ must belong to the same leaf $L$ of $\mathcal{F}.$

Finally, the following terminology will be used in the next
paragraph:\smallskip

\noindent \textbf{Terminology}: Let $D$ be 2-disc embedded in $S^{3}$ which
does not contain any conic singularity of $\mathcal{F}.$ We assume that $D$ is
in general position with respect to $\mathcal{F}$, $\partial D$ belongs to a
leaf $L$ of $\mathcal{F}$ and $\partial D$ is a non-nullhomotopic curve in
$L.$ We assume that $a$ is an anti-vanishing cycle on a leaf $L_{a}$ of
$\mathcal{F}$ such that: $a$ is the boundary of a sub-disc $A\subset D$ and
all the leaves of $\mathcal{F}_{|A}$ are closed curves, nullhomotopic in the
leaves of $\mathcal{F}$ where they belong. In this case, we will say that the
anti-vanishing cycle $a,$ as well, any anti-vanishing cycle $a^{\prime}$
equivalent to $a,$ \textit{appears in }$D$\textit{ or that }$a,$ equivalently
$a^{\prime},$ \textit{ is contained in }$D.$ Obviously, there exists at least
one center $c$ of $\mathcal{F}_{|D}$ in the interior of $A$ surrounded by $a.$

\begin{figure}[ptb]
\begin{center}
\includegraphics[scale=0.66]
{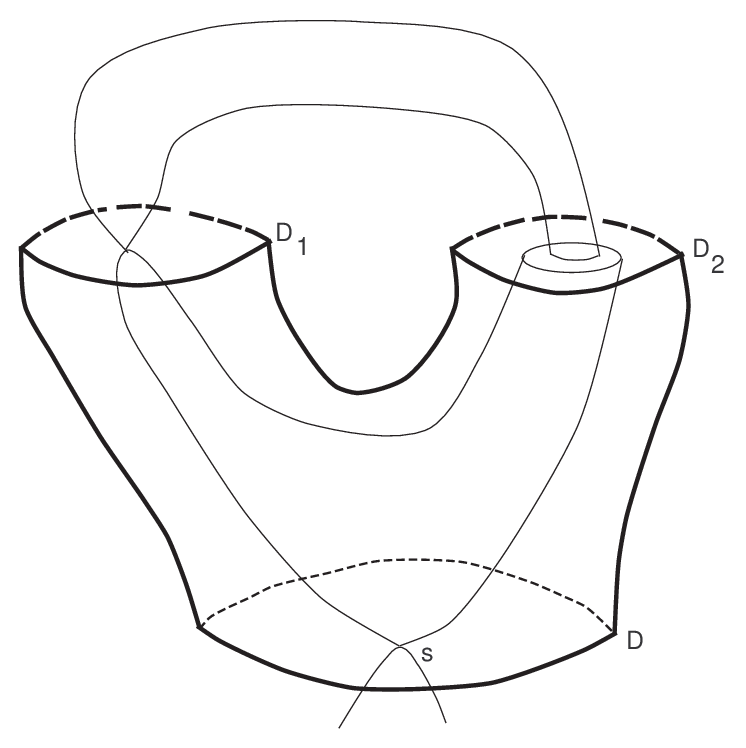}
\end{center}
\caption{A solid pair of pants with $s\neq s_{1}.$}%
\label{sxima10}%
\end{figure}

\section{\bigskip Existence of truncated Reeb components}

In this paragraph we make the following assumptions:

(1) $\mathcal{F}$ does not have all its leaves compact and simply connected;

(2) $\mathcal{F}$ does not have trivial pairs of singularities;

(3) $\mathcal{F}$ does not have bubbles or truncated bubbles;

(4) $\mathcal{F}$ does not have a vanishing cycle;

(5) $\mathcal{F}$ does not have a Morse component or a truncated Morse component.

From Remark \ref{elimination of all} such a foliation $\mathcal{F}$ can be
obtained after eliminating successively all trivial pair of singularities, all
bubbles and all truncated bubbles. Under these assumptions we will prove that
$\mathcal{F}$ has a truncated Reeb component. Notice that this component
appears in the Example 2 of Section 2. The basic idea for this proof is to
capture the conic singularities inside trivially foliated 3-balls and study
their complement.

The lack of bubbles and truncated bubbles give us a better insight into the
study of $\mathcal{F}$. From Proposition \ref{Reeb component}, the lack of
vanishing cycles implies the lack of Reeb/singular Reeb components. Also,
Novikov's theorem \cite{[Novikov]} implies that if $\mathcal{F}$ does not have
a vanishing cycle then $\mathcal{F}$ must have central or conic singularities.
Finally, from Lemma \ref{associated disc-gen Morse component}, the lack of
Morse/truncated Morse components implies that there do not exist pseudo-discs
with double points contained in some singular leaves of $\mathcal{F}$. Thus,
if $a$ is an anti-vanishing cycle on a leaf $L$, it follows that there exists
a 2-disc $D\subset L$ containing a conic singularity in its interior with
$\partial D=a.$

We need the following.

\begin{lemma}
\label{because of the lack of truncated bubbles} Let $D$ be an embedded disc
such that:

(i) $\mathcal{F}_{|D}$ is a trivial foliation of concentric circles
$d_{t}\subset L_{t},$ $t\in \lbrack0,1],$ where $L_{t}$ is a leaf of
$\mathcal{F}$ and $d_{0}$ degenerates to a point of $D;$

(ii) $d_{1}\nsim0$ in $L_{1}$ i.e. $d_{1}=\partial D$ is a non-nullhomotopic
curve in $L_{1}$.

Then there exists $t_{0}\in(0,1)$ such that: $d_{t}\sim0$ in $L_{t}$ for each
$t\in(0,t_{0}]$ i.e. all the curves $d_{t}$ are nullhomotopic in $L_{t} $ for
each $t\in(0,t_{0}],$ while $d_{t}\nsim0$ in $L_{t}$ for each $t\in(t_{0},1].$
\end{lemma}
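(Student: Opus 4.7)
The strategy is to study $A := \{t \in (0, 1] : d_t \text{ is nullhomotopic in } L_t\}$ and to show $A = (0, t_0]$ for some $t_0 \in (0, 1)$. Let $\phi : S^1 \times [0, 1] \to D$ be the parametrization of the concentric circles of $D$, with $\phi(S^1 \times \{t\}) = d_t$. The first observation is that $A$ contains a right neighborhood of $0$: since $D$ has no singularities of $\mathcal{F}$, the point $d_0$ is a regular point of $\mathcal{F}$, so a local product chart of $\mathcal{F}$ around $d_0$ shows that $d_t$ bounds a small disc in $L_t$ for all sufficiently small $t > 0$.

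The main step is to show $A$ is convex in $(0, 1]$, using the standing assumption (4) that $\mathcal{F}$ has no vanishing cycle. Set $B := (0, 1] \setminus A$ and suppose, for contradiction, that $A$ has a gap, with $t_1, t_2 \in A$, $t_1 < t_2$, and some $t' \in (t_1, t_2) \cap B$. Let $s_0 := \sup(B \cap [t_1, t_2])$. If $s_0 \in B$, the family $g_s(x) := \phi(x, s_0 + s(t_2 - s_0))$ exhibits $d_{s_0}$ as a vanishing cycle: the curves $g_s(S^1) = d_{s_0 + s(t_2 - s_0)}$ are simple and nullhomotopic in their leaves for $s \in (0, 1]$ (since $(s_0, t_2] \subset A$ by choice of $s_0$), while $g_0(S^1) = d_{s_0}$ is not, contradicting (4). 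If instead $s_0 \in A$, then the unique disc $\Delta_{s_0} \subset L_{s_0}$ bounded by $d_{s_0}$ --- unique because Proposition \ref{existence of Morse components} together with the standing assumptions excludes spherical leaves --- must contain a conic singularity in its interior; otherwise, extending $\Delta_{s_0}$ to nearby leaves through the trivial tubular neighborhood of a simply connected region in a leaf would make $A$ open at $s_0$, contradicting that $s_0$ is approached from below by $B$-points. Iterating this analysis on $[t_1, s_0]$, one obtains a strictly decreasing sequence of transition points, each of which requires a new interior conic singularity of $\mathcal{F}$; since there are only finitely many conic singularities, the recursion must terminate in the $s_0 \in B$ case, yielding the contradiction.

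With $A$ convex, set $T := \sup A$. If $T = 1$, then by convexity and hypothesis (ii) one has $A = (0, 1)$, and $g_s(x) := \phi(x, 1 - s\epsilon)$ with small $\epsilon > 0$ realizes $d_1$ as a vanishing cycle, contradicting (4); hence $T < 1$. Similarly, if $T \notin A$, then $d_T \nsim 0$, and $g_s(x) := \phi(x, T - s\epsilon)$ with $\epsilon \in (0, T)$ realizes $d_T$ as a vanishing cycle (using $(T - \epsilon, T) \subset A$ by convexity), again contradicting (4). Therefore $T \in A$, so $A = (0, T]$, and setting $t_0 := T$ completes the proof.

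The main obstacle is the convexity step, specifically the subcase $s_0 \in A$ where the single-shot vanishing-cycle construction fails. One must iterate through the conic singularities encountered as interior points of successive bounding discs, and finiteness of such singularities is what forces the recursion to terminate. The absence of spherical leaves --- guaranteed by Proposition \ref{existence of Morse components} under the Section 4 assumptions --- is essential for the bounding disc $\Delta_{s_0}$ to be unique, which is what makes the tubular-neighborhood argument conclusive.
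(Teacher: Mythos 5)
Your endpoint analysis (existence of the first transition time, that it lies in $A$, and that it is strictly less than $1$) matches the paper's use of the no-vanishing-cycle hypothesis, and your case $s_0\in B$ is fine. But the convexity step fails in exactly the case the lemma is really about. Consider the basic "gap" scenario: $B\cap[t_1,t_2]$ is a single open interval $(\alpha,\beta)$ with $\alpha,\beta\in A$. Then $s_0=\beta\in A$, the bounding disc $\Delta_{\beta}$ (and likewise the one at $\alpha$) contains a conic singularity, and no contradiction follows: this configuration produces only anti-vanishing cycles, not vanishing cycles, so hypothesis (4) is silent, and only two singularities are involved, so finiteness of singularities gives nothing. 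Your proposed recursion also does not descend: with $s_0\in A$ and $B$-points accumulating at $s_0$ from below, $\sup\bigl(B\cap[t_1,s_0]\bigr)=s_0$ again, so there is no strictly decreasing sequence of transition points, and the claim that each transition forces a \emph{new} conic singularity is never justified (the paper only argues $s_{1}\neq s_{0}$ for the two discs flanking one maximal gap, using that the intermediate $d_t$ are non-nullhomotopic).

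The missing ingredient is the standing assumption (3) of Section~4 that $\mathcal{F}$ has no truncated bubbles, which your argument never invokes and which is essential: a transverse disc passing through a truncated bubble realizes precisely the forbidden pattern (nullhomotopic, then not, then nullhomotopic again), so the lemma is false without that assumption and cannot follow from no-vanishing-cycles plus no-spherical-leaves alone. The paper's proof handles the gap case by a geometric construction: it replaces $D_{t_0}$ by a transverse disc $D_{t_0}'$ with trivial induced foliation, forms the sphere $S_{t_1}=E_{t_1}\cup A\cup D_{t_0}'$ bounding a ball $B_{t_1}$ containing the singularity $s_1$, and then, using a perfect disc $D_e$ in a trivial neighborhood of $s_1$ together with an annulus $C$ in a leaf and a perfect disc $D_{t'}$, exhibits a truncated bubble inside $B_{t_1}$, contradicting assumption (3). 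To repair your proof you would need to carry out this (or an equivalent) construction and appeal to the absence of truncated bubbles, rather than to a count of conic singularities.
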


\begin{proof}
Let $t_{0}=\sup \{t\in(0,1]:$ each $d_{t}$ is nullhomotopic in $L_{t}$ for
$t\leq t_{0}\}.$ Obviously a such $t_{0}$ exists because $\mathcal{F}$ does
not have a vanishing cycle on any of its leaves. Therefore there exist discs,
say $E_{t},$ such that: $\partial E_{t}=d_{t}$ and $E_{t}\subset L_{t}$ for
each $t\in(0,t_{0}].$ Let $D_{t}$ be the subdisc of $D$ bounded by $d_{t}$ for
each $t\in(0,1).$ Obviously $E_{t_{0}}\cap D$ consists of finitely many leaves
of $\mathcal{F}_{|D_{t_{0}}}.$ If $E_{t_{0}}\cap D\neq d_{t_{0}},$ we denote
by $d_{t_{0}^{\prime}}$ the leaf of $\mathcal{F}_{|Int(D_{t_{0}})}$ which is
closest to $d_{t_{0}}$ in $D_{t_{0}}.$ The disc $E_{t_{0}}$ must contain a
conical singularity, say $s_{0},$ see Figure 8. Notice that, if $E_{t_{0}}\cap
D\neq d_{t_{0}}$ and if $E_{t_{0},t_{0}^{\prime}}\subset E_{t_{0}}$ is the
annulus bounded by the curves $d_{t_{0}^{\prime}}$ and $d_{t_{0}},$ then
$s_{0}\in$ $E_{t_{0},t_{0}^{\prime}}$ otherwise $\mathcal{F}$ should have a bubble.

Now, let $t_{1}=\inf \{r\in(t_{0},1):d_{r}\sim0$ in $L_{r}$ and $d_{t}\nsim0$
in $L_{t}$ for each $t\in(t_{0},r)\}.$ Apparently, a such $t_{1}$ always
exists if the lemma is not valid. On the other hand, there exists a disc
$E_{t_{1}}\subset L_{t_{1}}$ with $\partial E_{t_{1}}=d_{t_{1}}$ and
$E_{t_{1}}$ must contain a conic singularity $s_{1}.$ We may see that
$s_{1}\neq s_{0}$ since $d_{t}\nsim0$ in $L_{t}$ for each $t\in(t_{0},t_{1}).$
Also, if we denote by $A=D_{t_{1}}-Int(D_{t_{0}})$ then the interior of
$E_{t_{1}}$ cannot intersect $A$ but it can intersect $Int(D_{t_{0}}),$ see
Figure 8.

\textit{Claim}: If there exists a $t_{1}\in(t_{0},1)$ as above, then
$\mathcal{F}$ has a truncated bubble.

\textit{Proof of Claim}. We may replace $D_{t_{0}}$ with a disc $D_{t_{0}%
}^{\prime}$ with $\partial D_{t_{0}}^{\prime}=d_{t_{0}}$ such that (see Figure 8):

(1) $\mathcal{F}_{|D_{t_{0}}^{\prime}}$ is trivial;

(2) $D_{t_{0}}\cap D_{t_{0}}^{\prime}=d_{t_{0}}$ and all the curves of
$\mathcal{F}_{|D_{t_{0}}^{\prime}}$ are contractible in the leaves of
$\mathcal{F}$ where they belong;

(3) $E_{t_{1}}\cap D_{t_{0}}^{\prime}=\varnothing$ and $E_{t_{0}}\cap
D_{t_{0}}^{\prime}=d_{t_{0}}.$

Let $S_{t_{1}}=E_{t_{1}}\cup A\cup D_{t_{0}}^{\prime}.$ Then $S_{t_{1}}$ is a
2-sphere and let $B_{t_{1}}$ be the 3-ball bounded by $S_{t_{1}}$ and
containing some neighborhood $V(s_{1})\subset L_{t_{1}}$ of $s_{1}$ which is
homeomorphic to a double cone.

In a trivial neighborhood $U$ of $s_{1}$ we may find first a simple closed
curve $e$ contained in some leaf $L$ of $\mathcal{F}_{|B_{t_{1}}}$and second,
an annulus $C\subset L\cap Int(B_{t_{1}})$ such that:

(i) $e$ is not nullhomotopic in $L;$

(ii) $\partial C=e\cup d_{t^{\prime}},$ where $t^{\prime}\in(t_{1}%
-\varepsilon,t_{1})$ for $\varepsilon$ sufficiently small and $d_{t^{\prime}}$
is a leaf of $\mathcal{F}_{|A}.$

Now we may consider a perfect disc $D_{e}\subset U$ with $\partial D_{e}=e$
and the perfect disc $D_{t^{\prime}}\subset D_{t_{1}}$ with $\partial
D_{t^{\prime}}=d_{t^{\prime}}.$ Obviously the sphere $D_{e}\cup C\cup
D_{t^{\prime}}$ is the boundary of a truncated bubble, say $W,$ contained in
$B_{t_{1}}.$ In this way we take a contradiction which proves our claim.

Finally, the claim above gives a contradiction to the assumption (3) and thus
our lemma is proven.
\end{proof}

We need the following definitions.

\begin{definition}
Let $L_{0},$ $L_{1}$ be two surfaces in $S^{3}$ such that:

1) $L_{0},$ $L_{1}$ are homeomorphic and each one is a subset of a leaf of
$\mathcal{F}$;

2) there exists a trivial foliated product $V=L\times \lbrack0,1]$ in $S^{3}$
such that, each $L\times \{t\}$ is a subset of a leaf of $\mathcal{F}$ with
$L\times \{0\}=L_{0},$ $L\times \{1\}=L_{1}.$

Then the surfaces $L_{0},$ $L_{1}$ will be called parallels in $\mathcal{F}$
and $\mathcal{F}_{|V}$ will be referred to as band of leaves of $\mathcal{F} $.
\end{definition}

\begin{definition}
Let $B$ be a 3-ball in $S^{3}$ and let $D_{i},$ $i=1,...,n$ be disjoint
2-discs in $\partial B$ such that:

\begin{itemize}
\item For each $i,$ $\mathcal{F}_{|D_{i}}$ is a trivial foliation.

\item For each $i,$ if $d_{i}=\partial D_{i},$ then $d_{i}$ is
non-contractible in the leaf of $\mathcal{F}$ where it belongs and it has
trivial holonomy.

\item If $S=\partial B-(\cup_{i}D_{i})$ then $S\mathcal{\ }$does not contain
any conic singularity.

\item After performing a finite number of Morse modifications, if necessary,
each leaf of $\mathcal{F}_{|B}$ is transformed to a leaf parallel to $S.$
\end{itemize}

The pair $(B,\mathcal{F}_{|B})$ will be referred to as a \emph{trivially
foliated ball with spots} $D_{i}.$ If $B$ has three spots then $B$ will be
referred to as a \emph{solid pair of pants}.

The set $\partial_{t}B=\partial W-(\cup_{i}D_{i})$ will be called the tangent
boundary of $B.$
\end{definition}

\begin{figure}[ptb]
\begin{center}
\includegraphics[scale=0.86]
{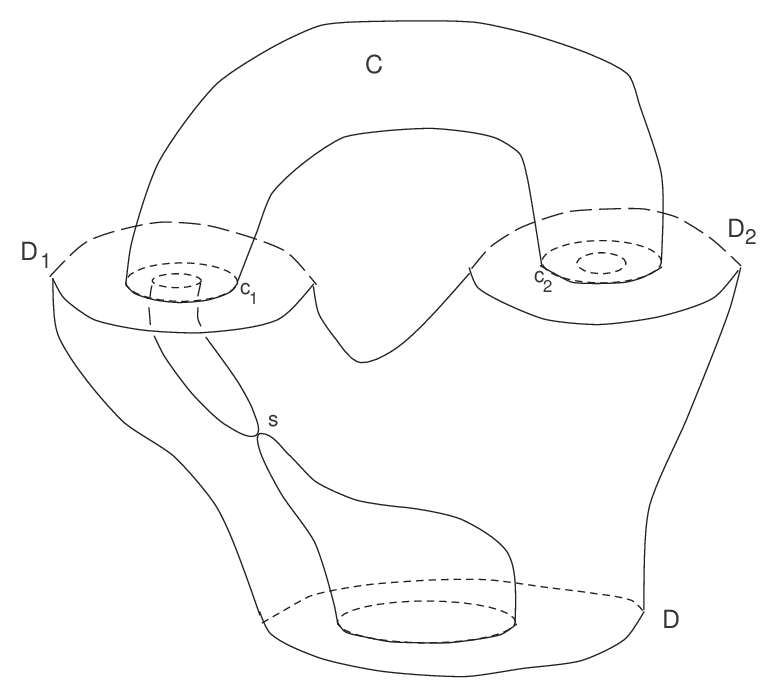}
\end{center}
\caption{A solid pair of pants with $s=s_{1}=s_{2}.$}%
\label{sxima11}%
\end{figure}

From Lemma \ref{because of the lack of truncated bubbles}, each spot of a
trivially foliated ball is a perfect disc. Therefore in the definition above
the three first conditions are the same with the conditions (1)-(3) of
Definition \ref{truncated bubble}. However, a trivially foliated ball with
spots is not a special case of a truncated bubble. The reason is that, if
$\overrightarrow{n}$ denotes the unit normal vector to $\mathcal{F}$, then
$\overrightarrow{n}$ is directed either inward or outward $D_{i}$ for each
$i.$ Also, the conic singularities which are associated to each spot $D_{i}$
are not necessarily distinct.

\begin{definition}
Let $B,$ $B^{\prime}$ be trivially foliated balls with spots. We will say that
$B$ and $B^{\prime}$ are equivalent if,

$(a)$ $B^{\prime}\subset B;$

$(b)$ $\partial_{t}B$ is parallel to $\partial_{t}B^{\prime}$ in $\mathcal{F}
$.

From now on we will consider classes $[B]$ of trivially foliated balls $B$ and
we will not distinguish two elements which belong in the same class. Thus, we
denote by $B$ the whole class $[B]$ if no misunderstandings are caused.
\end{definition}

\begin{definition}
Let $B$ be a 3-ball in $(S^{3},\mathcal{F})$ such that $(B,\mathcal{F}_{|B})$
is a trivially foliated ball with $n$ spots $D_{i}.$ Let $D_{1},$ $D_{2}$ be
two such spots in $\partial B$ and let $c_{1},$ $c_{2}$ be two closed leaves
of $\mathcal{F}_{|D_{1}},$ $\mathcal{F}_{|D_{2}}$ respectively. If there is an
annulus $C$ with $\partial C=c_{1}\cup c_{2}$ and if $C$ is contained in leaf
of $\mathcal{F}$, then we will say that the spot $D_{1}$ (resp. $D_{2})$ is
\emph{captured} or that the pair of spots $D_{1},$ $D_{2}$ are \emph{captured
between them}.

If a spot $D_{i}$ is not captured by another spot of $B$ then $D_{i}$ will be
called \emph{fre}$\emph{e}.$
\end{definition}

\begin{figure}[ptb]
\begin{center}
\includegraphics[scale=0.86]
{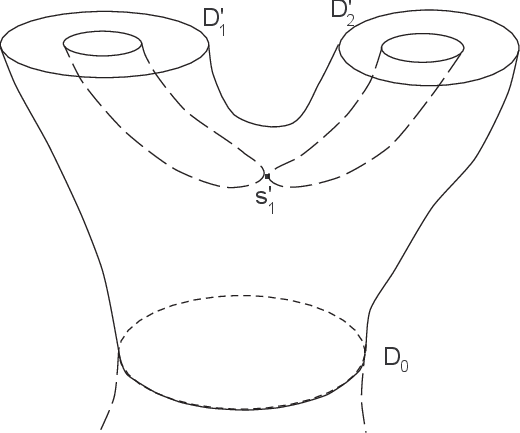}
\end{center}
\caption{In a solid pair of pants the hypothesis $s_{1}^{\prime}=s_{2}%
^{\prime}$ implies $s_{1}^{\prime}\neq s.$}%
\label{sxima12}%
\end{figure}Now we are able to prove the following proposition.

\begin{proposition}
\label{captured spot}Let $B$ be a trivially foliated ball with spots $D_{i},$
$i=1,..,k.$ If two of the spots of $B$ are captured between them then
$\mathcal{F}$ has a truncated Reeb component.
\end{proposition}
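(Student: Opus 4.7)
The plan is to use the captured annulus $C$ to exhibit a toral leaf $L$ of $\mathcal{F}$ whose complement in $S^{3}$ contains a solid torus foliated as a truncated Reeb component.

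First I would identify the leaf $L$ of $\mathcal{F}$ that contains $C$ and determine its topology. Since $L\cap B$ is parallel to $\partial_{t}B$, it is a planar surface (sphere with $k$ holes) whose boundary circles lie one on each spot: the captured pair $c_{1},c_{2}$ on $D_{1},D_{2}$ and additional circles $c_{3},\dots ,c_{k}$ on the remaining spots. Outside of $B$, the leaf $L$ contains the annulus $C$, which caps off $c_{1}\cup c_{2}$. I would then argue that every remaining circle $c_{j}$ $(j\ge 3)$ bounds a disc in $L\setminus\mathrm{Int}(B)$. If some $c_{j}$ failed to bound such a disc, then tracking the family of concentric circles on $D_{j}$ through the trivial foliation $\mathcal{F}|_{D_{j}}$ and applying Lemma~\ref{because of the lack of truncated bubbles} together with Lemma~\ref{associated disc-gen Morse component} would produce a vanishing cycle, a bubble, a truncated bubble, or a (possibly truncated) Morse component, contradicting one of the standing assumptions (3)--(5). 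A short Euler characteristic computation then gives $\chi(L)=(2-k)+0+(k-2)=0$, and since $\mathcal{F}$ is orientable, $L$ is a torus.

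Next I would invoke the classical fact that a torus in $S^{3}$ bounds two solid tori, and let $T$ be the one that, in a small neighbourhood of $C$, lies on the side pointing toward the centres $o_{1},o_{2}$ of the concentric foliations on the captured spots. By the triviality of each $\mathcal{F}|_{D_{i}}$, the parallel structure of $\mathcal{F}|_{B}$, and continuity, nearby concentric circles $c_{1,t}$ and $c_{2,\phi(t)}$ (for a continuous holonomy map $\phi$ between the trivial foliations of $D_{1}$ and $D_{2}$) cobound annuli $C_{t}$ in nearby leaves of $\mathcal{F}$; combining each $C_{t}$ with the disc caps on the other boundary circles (furnished by Step~1) yields interior leaves of $\mathcal{F}|_{T}$ that are topologically discs and spiral toward $L$. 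The absence of a vanishing cycle (assumption (4)) rules out the only alternative, in which these interior discs would close up into nested tori and produce a trivial product foliation of $T$ rather than the required Reeb-type spiralling. One of the captured spots, say $D_{1}$, then sits on $\partial T$ as the transverse, trivially foliated disc of the truncation, while the rest of $\partial T$ is $L$ with a small disc near $D_{1}$ removed, which is the punctured-torus tangent boundary of a truncated Reeb component.

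The main obstacle lies in the first step: rigorously showing that every $c_{j}$ $(j\ge 3)$ bounds a disc in $L\setminus\mathrm{Int}(B)$ requires a case analysis according to possible coincidences among the conic singularities $s_{1},\dots ,s_{k}$ associated with the spots -- exactly the configurations suggested by Figures~\ref{sxima10}--\ref{sxima12} -- and in each case one must derive one of the forbidden configurations (bubble, truncated bubble, Morse component, or vanishing cycle) unless the desired cap-disc exists. A secondary difficulty is confirming that the foliation on $T$ is genuinely of Reeb type rather than a product, which relies on applying the non-existence of vanishing cycles to the one-parameter family $\{C_{t}\}$ and on the fact that each leaf of $\mathcal{F}$ contains at most one conic singularity.
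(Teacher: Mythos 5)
There is a genuine gap, and it sits exactly where you flag it, in Step 1 — but it is not a technical loose end, it is the heart of the matter. Nothing in the standing assumptions forces the circles $c_j$, $j\ge 3$, to bound discs in $L\setminus \mathrm{Int}(B)$. Each spot is a perfect disc, so the outer leaves of $\mathcal{F}_{|D_j}$ are \emph{essential} in the leaves of $\mathcal{F}$ containing them (condition (3) of Definition \ref{perfect position}); being non-nullhomotopic outside $B$ is therefore the expected situation, not a contradiction — indeed the captured pair $c_1,c_2$ itself is capped by an annulus, not by discs, and the $c_j$ could likewise be capped by annuli to other circles or lie on non-compact leaves. Failure to bound a disc does not produce a bubble, truncated bubble, vanishing cycle, or Morse component, so Lemmas \ref{because of the lack of truncated bubbles} and \ref{associated disc-gen Morse component} cannot be invoked to rule it out. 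Moreover, $L\cap B$ need not be a single planar surface parallel to $\partial_t B$: leaves of $\mathcal{F}_{|B}$ are parallel to the tangent boundary only \emph{after} Morse modifications, since $B$ contains the conic singularities attached to its spots. Hence the Euler-characteristic count, and with it the claim that the leaf through $C$ is a torus, is unsupported; in general that leaf need not even be compact.

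Even granting a toral leaf, Step 2 does not deliver the statement. The solid torus you choose — the side toward the centres $o_1,o_2$ — contains the conic singularity (in the relevant configuration a single singularity $s=s_1=s_2$, as in Figure \ref{sxima11}) corresponding to the captured spots, so the induced foliation on it has singular leaves and is not a Reeb foliation; a truncated Reeb component is by definition singularity-free with interior leaves planes, so it cannot be read off by merely removing a neighbourhood of a spot. The paper's proof proceeds differently and this difference is essential: it first shows the anti-vanishing cycles on $D$, $D_1$, $D_2$ all determine the \emph{same} singularity $s$ (otherwise a bubble exists, contradicting assumption (3)), then performs a Morse modification around $s$ turning the pair of pants into a cylinder; in the modified foliation $\mathcal{F}_1$ the region $A\cup B_1$ has only non-singular leaves and a leaf of $\mathcal{F}_{1|D_1'}$ becomes a genuine \emph{vanishing} cycle, so Theorem \ref{Reeb component} (the generalized Novikov theorem) yields a Reeb component of $\mathcal{F}_1$, and undoing the modification turns it into a truncated Reeb component of $\mathcal{F}$. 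Your proposal has no substitute for this modification-plus-Novikov mechanism; appealing to the absence of vanishing cycles to force "Reeb-type spiralling" inside your solid torus gets the logic backwards, since the Reeb dynamics must be produced (via a vanishing cycle of an auxiliary foliation), not assumed from the dichotomy product-versus-Reeb.
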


\begin{proof}
We assume that the spots $D_{1},$ $D_{2}$ are captured between them. Then we
may find a disc $D$ embedded in $B$, in general position with respect to
$\mathcal{F}$ such that:

1) $\mathcal{F}_{|D}$ has the form of Figure 9;

2) $\partial D,$ $\partial D_{1},$ $\partial D_{2}$ are the boundaries of a
pair of pants $P\subset \partial_{t}B\subset L_{0},$ where $L_{0}$ is a leaf of
$\mathcal{F}$.

Thus, in a standard way, we may isotope $D$ in $B$ such that $\mathcal{F}%
_{|D}$ is trivial.

Let denote by $B_{0}\subset B$ the 3-ball with $\partial B_{0}=P\cup D_{1}\cup
D_{2}\cup D.$ On each disc $D$, $D_{1},$ $D_{2}$ there is respectively an
anti-vanishing cycle, say $a,$ $a_{1},$ $a_{2},$ because by hypothesis, there
do not exist vanishing cycles. From the proof of Lemma
\ref{associated disc-gen Morse component} the anti-vanishing cycle $a$ (resp.
$a_{1},$ $a_{2})$ determines a conic singularity $s$ (resp. $s_{1},$ $s_{2}).$

Now we claim that $s=s_{1}=s_{2}.$ In fact, assuming that $s\neq s_{1}$ we
will get a contradiction. To prove it, we may assume that $s\in D,$ $s_{1}\in
D_{1}$ and thus we have the configuration of Figure 10. But then $\mathcal{F}$
must have a bubble which is impossible from the assumption (3) at the
beginning of the section.

Thus, we have that $s=s_{1}=s_{2}$ and without loss of generality, we may
assume that we have the configuration of Figure 11. Since the spots $D_{1},$
$D_{2}$ are captured between them, there exists an annulus $C$ with $\partial
C=c_{1}\cup c_{2}$ such that: $C\cap B_{0}=c_{1}\cup c_{2}$ where $c_{i}$ is a
leaf of $\mathcal{F}_{|D_{i}},$ $i=1,2$ and each $c_{i}$ is a non-contractible
curve in the leaf $L$ of $\mathcal{F}$, where it belongs. We denote by
$D_{i}^{\prime}$ the subdisc of $D_{i}$ bounded by $c_{i},$ $i=1,2.$ Then
$S=C\cup D_{1}^{\prime}\cup D_{2}^{\prime}$ is a 2-sphere and we denote by $A$
the 3-ball bounded by $S$ such that $A\cap B_{0}=D_{1}^{\prime}\cup
D_{2}^{\prime}.$ Since $\mathcal{F}_{|D_{i}^{\prime}}$ is trivial we have that
all the leaves in the interior of $A$ are either annuli or discs and no leaf
of $\mathcal{F}_{|A}$ contains a conic singularity.

On the other hand, by applying a Morse modification around $s$ the pair of
pants $P$ can be transformed to a cylinder $P_{1}$ such that:

In $P_{1}$ the curve $\partial D$ is contractible in the leaf of $\mathcal{F}$
where it belongs while the curves $\partial D_{1},$ $\partial D_{2},$ as well
as, the curves $c_{1}$ and $c_{2}$ are not nullhomotopic in the leaves of
$\mathcal{F}$ where they belong respectively.

Let $B_{1}$ be the 3-ball with $\partial B_{1}=P_{1}\cup D_{1}\cup D_{2}$ and
$B_{1}\cap A=D_{1}^{\prime}\cup D_{2}^{\prime}.$ If we denote by
$\mathcal{F}_{1}$ the Morse foliation obtained after performing the previous
Morse modification, we deduce by our construction, that each leaf of
$\mathcal{F}_{1|B_{1}}$ is also non-singular.

Examining now the foliation $\mathcal{F}_{1|D_{1}^{\prime}}$ (or
$\mathcal{F}_{1|D_{2}^{\prime}})$ we derive the existence of a leaf $a$ of
$\mathcal{F}_{1|D_{1}^{\prime}}$ which defines a vanishing cycle on some leaf,
say $L_{0}$ of $\mathcal{F}.$ This follows from the following two facts: (1)
all the leaves of $\mathcal{F}_{1|A\cup B_{1}}$ are non-singular and (2) any
leaf $L^{\prime}$ of $\mathcal{F}_{|A}$ homeomorphic to a disc is a part of a
leaf $L^{\prime \prime}$ of $\mathcal{F}_{1}$ which stays always in $A\cup
B_{1}.$ The latter follows easily from Lemma
\ref{because of the lack of truncated bubbles}. Finally, from Proposition
\ref{Reeb component} the vanishing cycle $a$ implies the existence of a Reeb
component for $\mathcal{F}_{1}$ and thus $\mathcal{F}$ has a truncated Reeb component.
\end{proof}

We will describe now an algorithm which allow us to construct truncated Reeb components.

Let $D_{0}$ be a perfect disc and let $c_{0}=\partial D_{0}.$ We distinguish
the following two cases:

(I) There exists a solid pair of pants $B$ with three spots $D_{0},$ $D_{1}, $
$D_{2}.$

(II) There does not exist such a solid pair of pants $B.$

We proceed as follows:

In the case (II) we do nothing and our procedure stops.

In the case (I) two of the spots $D_{0},$ $D_{1},$ $D_{2}$ could be captured
between them but at least one will be free. Without loss of generality, we
assume that $D_{1}$ is a free spot and we repeat the some procedure with
$D_{1}$ in the place of $D_{0}.$ That is, for the free spot $D_{1}$ we examine
if there is a solid pair of pants $B^{\prime}$ with three spots $D_{1},$
$D_{1}^{\prime},$ $D_{2}^{\prime}$ such that: $B^{\prime}\cap B=D_{1}.$

If such a $B^{\prime}$ exists, then we set $B_{1}=B\cup B^{\prime}$ and thus a
trivially foliated 3-ball $B_{1}$ with 4 spots in $S^{3}$ is obtained. In this
case we will say that $B$ can be developed \textit{by adding a solid pair of
pants}. The spots of $B_{1}$ are the discs $D_{0},$ $D_{2},$ $D_{1}^{\prime},$
$D_{2}^{\prime}.$ This procedure will be referred to as the \textit{addition
of a trivially foliated solid pair of pants to }$B$\textit{\ along }$D_{1}$
and the free spot $D_{1}$ will be called an \textit{inessential free spot}.
Otherwise, $D_{1}$ will be called \textit{essential free spot}.

In the following, we distinguish two cases: either our procedure can be
continued, which means that we may add one more trivially foliated solid pair
of pants to $B_{1}$ along some essential free spot of $B_{1}$ or, $B_{1}$
cannot be developed further.

Let $B$ be a trivially foliated ball with spots. Then from Lemma
\ref{because of the lack of truncated bubbles} each spot of $B$ is a perfect
disc. Therefore, to each spot of $B$ corresponds a conic singularity. Let
$s_{i},$ $i=1,2,..,k$ be the conic singularities which correspond to the spots
of $B.$ Without loss of generality we may assume that each $s_{i}$ belongs in
the interior of $B.$ These singularities $s_{i}$ will be called conic
singularities of $B.$

We need the following lemma.

\begin{lemma}
\label{adding solid pair of pants} Let $B$ be a trivially foliated 3-ball in
$S^{3}$ with $n$ spots. We assume that $n_{1}$ of them are free and $n_{2}$
are captured. If we add to $B$ a trivially foliated solid pair of pants then
we take a 3-ball $B^{\prime}$ with, either $n_{1}-1$ free spots or with
$n_{1}-3$ free spots or $B^{\prime}$ has one more conic singularity with
respect to $B.$
\end{lemma}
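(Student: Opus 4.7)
Plan. When $P$ is glued to $B$ along the free spot $D_1$, the result $B' = B \cup P$ is a trivially foliated $3$-ball whose spots are the $n-1$ old spots $D_2,\ldots,D_n$ together with the two new spots $D_1', D_2'$ of $P$, for a total of $n+1$. The first step is to show that the $n_2$ captured pairs of $B$ remain captured in $B'$. Each leaf of $\mathcal{F}_{|B}$ and $\mathcal{F}_{|P}$ is a sphere with at least three boundary circles (with $n$ and $3$ holes respectively), and such a planar surface contains no embedded sub-annulus that cobounds two closed curves lying on distinct spots. Consequently every capturing annulus of a pair $(D_i, D_j)$ with $i,j \geq 2$ lies outside both $B$ and $P$; it is therefore contained in the complement of $B'$ and still captures the same pair.

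The second step is to analyze the two new spots. Each leaf of $\mathcal{F}_{|P}$ is a pair of pants meeting $\partial D_1, \partial D_1', \partial D_2'$, so the circles of $\mathcal{F}_{|D_k'}$ extend into leaves outside $B'$. Using the standing hypotheses of the section together with Lemma \ref{because of the lack of truncated bubbles}, I would establish the following dichotomy: for each $k\in\{1,2\}$, either $D_k'$ is captured with some other spot of $B'$, or the conic singularity $s_k'$ associated with the perfect disc $D_k'$ is not among the singularities $s_2,\ldots,s_n$ already carried by the spots of $B$. Indeed, if $D_k'$ were free while $s_k' = s_i$ for some $i\geq 2$, then two distinct spots of $B'$ would share the same conic singularity; tracing the corresponding discs in the singular leaf together with the pair-of-pants geometry of $\mathcal{F}_{|P}$ would produce either a bubble or a truncated bubble, or else a Morse/truncated Morse component, each excluded by the standing assumptions. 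In this alternative $B'$ acquires one more conic singularity than $B$, which is the third conclusion of the lemma.

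In the remaining situation both $D_1'$ and $D_2'$ are captured in $B'$ and the set of conic singularities is unchanged. Let $f$ denote the number of old free spots $D_i$ ($i\geq 2$) that become captured in $B'$ by a new annulus involving $D_1'$ or $D_2'$. Then $B'$ has exactly $(n_1-1)-f$ free spots, and the lemma is reduced to showing $f\in\{0,2\}$. The parity would follow from a matching argument based on the threefold symmetry of $\mathcal{F}_{|P}$: any new capturing annulus joining $D_k'$ to an old free spot $D_i$ is, via the pair-of-pants leaves crossing $P$, forced to come with a companion annulus on the opposite side of $P$. This companion either pairs the remaining new spot $D_{k'}'$ with a second old free spot (giving $f=2$), or is absorbed into a capture of $D_1'$ with $D_2'$ themselves (giving $f=0$).

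The main obstacle is the dichotomy of the second step: ruling out the possibility that a new spot $D_k'$ is free while $s_k'$ already appears as a singularity carried by a spot of $B$. This requires careful use of Proposition \ref{existence of Morse components}, Reeb's stability theorem, and the full force of the standing assumptions (1)--(5) of the section to exclude all the intermediate topologies that the leaf extension outside $B'$ could present near $D_k'$. Once the dichotomy is in place, the parity count of the third step is a bookkeeping consequence of the trilinear structure of pair-of-pants leaves in $\mathcal{F}_{|P}$.
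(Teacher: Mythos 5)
Your plan diverges from the paper's argument at two load-bearing points, and both are genuine gaps rather than routine details. First, your central dichotomy -- ``if a new spot $D_k'$ is free then its singularity $s_k'$ cannot already be a singularity carried by a spot of $B$'' -- is only gestured at (``tracing the corresponding discs \dots would produce either a bubble or a truncated bubble, or else a Morse/truncated Morse component''), and it conflicts with the paper's setup and proof. The paper notes explicitly, right after defining trivially foliated balls with spots, that the conic singularities associated to different spots need not be distinct; and in the decisive case where both new spots are free, the paper proves essentially the opposite of your claim: if $s_1'\neq s_2'$ then at least one of them \emph{must} coincide with a conic singularity of $B$, since otherwise Proposition \ref{existence of Morse components} would force a spherical leaf and hence a bubble or a truncated Morse component, which the standing assumptions exclude. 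The only configuration ruled out is that of Figure 12, namely $s_1'=s_2'$ coinciding with a singularity of $B$. So the per-spot exclusion of shared singularities that your ``one more conic singularity'' disjunct rests on cannot be established as stated; that disjunct has to come from the actual both-free analysis (the Figure 12 configuration together with the spherical-leaf argument), not from forbidding repeated singularities.

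Second, your reduction of the captured case to the parity claim $f\in\{0,2\}$ is both unproved and incomplete. The ``companion annulus forced by the threefold symmetry of $\mathcal{F}_{|P}$'' is not an argument, and the enumeration behind it omits configurations with $f=1$ -- for instance both new spots captured by the same old free spot, or one new spot captured by an old free spot while the other is captured with an already captured spot -- which would give $n_1-2$ free spots, a value the lemma does not permit; these must be excluded or shown to fall under another disjunct. The paper avoids any parity argument by a direct case count: if only one of $D_1',D_2'$ is captured, the capturing spot is a free spot of $B$, so two free spots ($D_0$ and that spot) are destroyed and one ($D_2'$) is gained, giving $n_1-1$; if $D_1',D_2'$ are captured with each other, only $D_0$ is lost, again $n_1-1$; if each of $D_1',D_2'$ is captured by a spot of $B$, three free spots are destroyed, giving $n_1-3$; and only the both-free case requires the singularity count. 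Your first step (old captured pairs remain captured) is unobjectionable, but it is not where the difficulty of the lemma lies.
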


\begin{proof}
By adding a trivially foliated solid pair of pants $B_{0}$ to $B$ along a free
spot of $B$, say $D_{0}$, we obtain a new 3-ball $B^{\prime}$ with two more
spots, say $D_{1}^{\prime},$ $D_{2}^{\prime}.$ If only one of $D_{1}^{\prime
},$ $D_{2}^{\prime}$ is captured then $B^{\prime}$ has necessarily $n_{1}-1$
free spots. Indeed, assuming for example that $D_{1}^{\prime}$ is a captured
spot and that $D_{2}^{\prime}$ is free, we deduce the existence of a free spot
of $B$, say $D,$ such that $D_{1}^{\prime}$ is captured by $D.$ Therefore, we
have destroyed two free spots of $B$: first the spot $D$ and second the spot
$D_{0}$ by adding $B_{0}.$ On the other hand, $B^{\prime}$ has a new free
spot, with respect to $B,$ the spot $D_{2}^{\prime}.$ Therefore $B^{\prime}$
has $n_{1}-1$ free spots. If both $D_{1}^{\prime},$ $D_{2}^{\prime}$ are
captured between them, then $B^{\prime}$ has again $n_{1}-1$ free spots
because the free spot $D_{0}$ of $B$ is destroyed. If both $D_{1}^{\prime},$
$D_{2}^{\prime}$ are captured but each one is captured with a spot of $B$ then
$B^{\prime}$ has $n_{1}-3$ captured spots, since we have destroyed three free
spots of $B.$

We assume now that both the spots $D_{1}^{\prime},$ $D_{2}^{\prime}$ are free.
Then on each $D_{i}^{\prime}$ there is a leaf $c_{i}$ of $\mathcal{F}%
_{|D_{i}^{\prime}}$ which defines an anti-vanishing cycle. Therefore, from
Lemma \ref{associated disc-gen Morse component}, there is a disc $\Delta_{i}$
such that: $\partial \Delta_{i}=c_{i},$ $\Delta_{i}$ is contained in a leaf
$L_{i}$ of $\mathcal{F}$ and $\Delta_{i}$ contains a conic singularity
$s_{i}^{\prime}.$ We have the followings:

(i) For each spot, say $D^{\prime},$ of $B^{\prime}$ it is $\Delta_{i}\cap
D^{\prime}=\varnothing.$

\noindent Indeed, if not, the spot $D_{i}^{\prime}$ should not be free.

(ii) If $s_{1}^{\prime}=s_{2}^{\prime}$ then $s_{1}^{\prime}\neq s$ for each
conic singularity $s$ of $B.$

\noindent Indeed, this case leads to the configuration of Figure 12.
Obviously, $s_{1}^{\prime}\neq s$ for each conic singularity $s$ of $B.$

(iii) If $s_{1}^{\prime}\neq s_{2}^{\prime}$ then some $s_{i}^{\prime}$ should
coincide with some conic singularity $s$ of $B.$

\noindent Indeed, if neither $s_{1}^{\prime}$ nor $s_{2}^{\prime}$ coincides
with a conic singularity $s$ of $B$ then $\mathcal{F}$ should contain a
spherical leaf. Now, from Proposition \ref{existence of Morse components},
since threre are no bubbles, the existence of a spherical leaf implies the
existence of a pseudo-toral leaf $T.$ Furthernore, since the 3-ball
$B^{\prime}$ is trivially foliated, the existence of $T$ implies the existence
of a Morse/truncated Morse component. But this is not allowed by our
hypothesis. Therefore our lemma is proven.
\end{proof}

A trivial foliated ball $B$ with spots will be called \textit{completely
expanded} if all the free spots of $B$ are essential.

Let now $B,$ $B^{\prime}$ be two disjoint trivially foliated balls with spots
and let $D,$ $D^{\prime}$ be two spots on $B,$ $B^{\prime}$ respectively. We
will say that $B,$ $B^{\prime}$ \textit{can be added through the spots }$D,$
$D^{\prime},$ if there are leaves $c,$ $c^{\prime}$ of $\mathcal{F}_{|D},$
$\mathcal{F}_{|D^{\prime}}$ respectively such that: $c,$ $c^{\prime}$ belong
on the same leaf $L$ of $\mathcal{F}$ and there exists an annulus $A\subset L$
with $\partial A=c\cup c^{\prime}.$ Otherwise, we will say that $B$ and
$B^{\prime}$ cannot be added.

In the case that $B,$ $B^{\prime}$ can be added through the spots $D,$
$D^{\prime}$ then, after performing a finite number of Morse modifications, a
new trivially foliated ball $B_{0}$ is created and we will say that $B_{0}%
$\textit{\ contains both }$B$\textit{\ and }$B^{\prime}.$ Therefore, modulo
Morse modifications, we deduce from Lemma \ref{adding solid pair of pants},
that there exists a maximum number of trivially foliated balls with spots, say
$B_{1},...,B_{k},$ which are disjoint among them, i.e. $B_{i}\cap
B_{j}=\varnothing$ for $i\neq j,$ and none of them can be added to another, in
other words, for each $i,$ all the free spots of $B_{i}$ are essential.

For each $i,$ we assume that $B_{i}$ has $n_{i}$ essential free spots.
Henceforth we will denote by $\mathcal{B}=\{B_{i}\}$ this family.

Now we are able to prove the following theorem.

\begin{theorem}
\label{existence Morse component}The Morse foliation $\mathcal{F}$ contains a
truncated Reeb component.
\end{theorem}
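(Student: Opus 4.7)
The plan is to argue by contradiction: suppose $\mathcal{F}$ contains no truncated Reeb component. Then by Proposition \ref{captured spot}, no ball $B_{i}\in\mathcal{B}$ has a pair of captured spots; since capturing is a symmetric pairwise relation among spots of the same $B_{i}$, this forces $k_{i}=0$ for every $i$, so every spot of every $B_{i}$ is an inessential free spot. I will derive a contradiction from this together with hypotheses (1)--(5) and the maximality of $\mathcal{B}$.

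First I would verify that $\mathcal{B}\neq\varnothing$. Hypothesis (1) supplies a leaf $L_{1}$ that is not simply connected, while hypotheses (3) and (5) combined with Proposition \ref{existence of Morse components} exclude spherical leaves and pseudo-tori. Pushing a small nullhomotopic transverse circle along a transverse arc that connects a point in a simply-connected local regime to $L_{1}$ produces a smooth family $f_{t}(S^{1})$ passing from nullhomotopic to non-nullhomotopic curves; since $\mathcal{F}$ has no vanishing cycle by (4), the transition occurs in the anti-vanishing direction, yielding an anti-vanishing cycle in the sense of Definition \ref{anti-vanishing}. By Lemma \ref{associated disc-gen Morse component} and hypothesis (5), this cycle is surrounded by a perfect disc $D_{0}$ (Definition \ref{perfect position}) associated to a conic singularity. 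The disc $D_{0}$ seeds a trivially foliated ball with one spot, which, via the development algorithm preceding Lemma \ref{adding solid pair of pants}, terminates at a completely expanded ball $B_{1}\in\mathcal{B}$; finiteness of the set of conic singularities guarantees that $\mathcal{B}$ itself is a finite family.

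Now the heart of the argument. Fix $B_{i}\in\mathcal{B}$ and any spot $D\subset\partial B_{i}$ with $c=\partial D\subset L$. By Definition \ref{perfect position}(3) and Lemma \ref{because of the lack of truncated bubbles}, $c$ is non-nullhomotopic in $L$ and has trivial holonomy on both sides. I would trace the leaf $L$ outside $B_{i}$ starting from $c$, working inside $S^{3}\setminus\bigcup_{j}\operatorname{Int}(B_{j})$. Along this extension $c$ cannot become nullhomotopic (no vanishing cycle), the leaf $L$ cannot hit a spherical leaf or pseudo-torus (hypotheses (3) and (5) via Proposition \ref{existence of Morse components}), and it cannot lie on a generalized Reeb component (hypothesis (4) via Theorem \ref{Reeb component}). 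These exclusions should force $L$ to reach the tangent boundary of some $B_{j}\in\mathcal{B}$ along an annulus $C$ whose other boundary $c'$ is a leaf of the trivial foliation on a spot $D'\subset\partial B_{j}$. If $B_{j}\neq B_{i}$, then $C$ shows that $B_{i}$ and $B_{j}$ can be added through $D$ and $D'$, contradicting the maximality of $\mathcal{B}$. Hence $B_{j}=B_{i}$, the pair $D,D'$ is captured in $B_{i}$, so $k_{i}\geq 1$, contradicting $k_{i}=0$. Proposition \ref{captured spot} then converts the captured pair into the desired truncated Reeb component.

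The main obstacle is the tracing step of the third paragraph: one must rigorously justify that the leaf $L$, once it exits $B_{i}$ through the trivial-holonomy collar of $c$, is forced to close up along an annular connection to a spot of some $B_{j}\in\mathcal{B}$ rather than exhibit some escape behavior not already forbidden. The argument should combine the trivial-holonomy one-sided collar of $c$ in $L$ (furnishing a family of parallel non-nullhomotopic circles just outside $B_{i}$), Reeb stability applied to all tangent boundaries $\partial_{t}B_{j}$, and an exhaustive case analysis using each of the excluded configurations (bubbles, truncated bubbles, Morse and truncated Morse components, and Reeb components) to eliminate every alternative asymptotic behavior of $L$ outside $\bigcup_{j}B_{j}$. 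Once the annulus $C$ is produced, the remainder of the proof is immediate from Proposition \ref{captured spot}.
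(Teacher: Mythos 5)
Your reduction to ``some $B_{i}$ must have two captured spots, or two elements of $\mathcal{B}$ can be added'' is the same starting point as the paper, and the use of Proposition \ref{captured spot} at the end is correct. But the step you yourself flag as the main obstacle is a genuine gap, and it is not a technical loose end: it is essentially the whole content of the theorem. You assert that the leaf $L$ through (a circle of) a spot of $B_{i}$, traced outside $\bigcup_{j}B_{j}$, is ``forced to close up along an annular connection'' to a spot of some $B_{j}$, because bubbles, truncated bubbles, Morse/truncated Morse components, Reeb components and vanishing cycles are excluded. None of hypotheses (1)--(5) controls the asymptotic behavior of an individual leaf: $L$ may be non-compact, may spiral onto other leaves or limit on a compact leaf or a more complicated minimal set without ever containing an annulus joining a circle of one spot to a circle of another, and both ``captured'' and ``can be added'' require such an annulus to lie inside a single leaf. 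Ruling out all such escape behaviors is a Novikov-type global argument that your proposal only gestures at (``exhaustive case analysis''), so the contradiction with $k_{i}=0$ and with the maximality of $\mathcal{B}$ is not actually reached.

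The paper's proof deliberately avoids tracing leaves. Assuming no $B_{i}$ has captured spots, it fixes $B_{1}$, sweeps the complement $W_{1}=S^{3}-Int(B_{1})$ by a one-parameter family of embedded discs $D_{t}$ with boundary on $\partial B_{1}$, and studies the anti-vanishing cycles that appear on small general-position perturbations of these discs. It sets $t_{0}=\max\mathcal{I}$, where $\mathcal{I}$ is the set of parameters at which every nearby disc carries a single anti-vanishing cycle equivalent to a fixed $a_{0}$, shows $t_{0}\notin\mathcal{I}$ and $t_{0}<1$, and then extracts from discs $Z_{t_{n}}^{+}$ on the right of $D_{t_{0}}$ a convergent sequence of centers surrounded by anti-vanishing cycles equivalent to some $a_{0}^{\prime}$ inequivalent to $a_{0}$. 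A local chart at the limit point is used to build two auxiliary discs straddling $D_{t_{0}}$, which produce a solid pair of pants whose spots carry $a_{0}$ and $a_{0}^{\prime}$; this contradicts the complete expansion of $B_{1}$ and the absence of captured spots. In other words, the compactness and limit argument on transverse discs is exactly the device that replaces your unproved leaf-tracing claim, so your proposal as it stands does not constitute a proof, nor an alternative route of comparable substance.
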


\begin{proof}
In order to fix the notation, if $N$ is a compact manifold the set
$C^{r}(N,S^{3})$ of $C^{r}$ maps from $N$ to $S^{3}$ for $r\geq2,$ will be
equipped with the strong topology, (see \cite{[Hirsch]}, Ch. 2). Roughly
speaking, a neighborhood of some $f\in C^{r}(N,S^{3})$ consists of all maps
$g\in C^{r}(N,S^{3})$ which are close to $f$ together with their derivatives
of order $k,$ $k=0,\cdots,r,$ at each point of $N.$ A neighborhood of $f$
consisting of all such $g$ will be denoted by $U_{\varepsilon}(f(N))$ or by
$U_{\varepsilon}(f).$ The sphere $S^{3}$ is also equipped with its standard metric.

By performing Morse modifications on $\mathcal{F}$ we obtain a foliation
$\mathcal{F}_{0}$ for which there exists the family $\mathcal{B}=\{B_{i}\}$
defined above, that is, each $B_{i}$ is a trivially foliated, completely
expanded ball with spots and $B_{i}\cap B_{j}=\varnothing$ for $i\neq j.$ We
may also assume that each conic singularity of $\mathcal{F}$ is contained in
the interior of some $B_{i}.$ Obviously each $B_{i}$ has at least two spots.
Our hypothesis implies that there do not exist disjoint elements $B_{i},$
$B_{j}\in$ $\mathcal{B}$ which can be added. We claim that the existence of
two captured spots, say $D,$ $D^{\prime},$ on the same $B_{i}$ implies the
existence of a truncated Reeb component for $\mathcal{F}$. To prove our claim,
we deduce from Proposition \ref{captured spot} the existence of a truncated
Reeb component, say $C_{0},$ for the foliation $\mathcal{F}_{0}$ and we will
show that $\mathcal{F}$ should also have a truncated Reeb component. Indeed,
in the proof of Proposition \ref{captured spot}, which ensures the existence
of $C_{0},$ a solid pair of pants $B$ is depicted in some $B_{i}$ with the
following features: \ $B$ has two spots captured between them and there exists
a common conic singularity $s$ associated to each spot of $B.$ Obviously, the
inverse Morse modifications, which lead $\mathcal{F}_{0}$ back to
$\mathcal{F}$ affect $C_{0},$ if and only if these Morse modifications are
applied around $s.$ But even in this case, it is clear that $C_{0}$ is
transformed to a truncated Reeb component $C$ of $\mathcal{F}$.

Therefore, in order to prove the theorem we will show that the existence of
the family $\mathcal{B}=\{B_{i}\}$ and the hypothesis that all the spots of
$B_{i}$ are essential free spots leads to a contradiction.

Let denote by $D_{1}^{i},...,D_{n_{i}}^{i}$ the spots of $B_{i},$
$i=1,2,..,k,$ and by $a_{j}^{i}$ the anti-vanishing cycle which appears in the
spot $D_{j}^{i}.$ We pick an arbitrary element of the family $\mathcal{B}$,
say $B_{1}.$ Let $W_{1}=S^{3}-Int(B_{1}),$ $S_{1}=\partial W_{1}=\partial
B_{1}$ and $c_{i}=\partial D_{i}^{1}.$

We consider a $C^{\infty}$-embedding $H:D^{2}\times \lbrack0,1]\rightarrow
W_{1}$ such that:

(1) $H(\partial D^{2}\times \lbrack0,1])\subset S_{1}$ and $H(Int(D^{2}%
)\times \lbrack0,1])\subset Int(W_{1}).$

(2) If $h_{t}=H(\cdot,t),$ the disc $D_{t}=h_{t}(D^{2})$ is transverse to
$S_{1}$ along $d_{t}=\partial D_{t}$ for each $t\in \lbrack0,1].$

Obviously, $D_{t}$ separates $W_{1}$ and the orientation of $[0,1]$ permit us
to talk about the left hand side and the right hand side of $D_{t}$ in
$W_{1}.$

Furthermore, we may assume that the embedding $H$ satisfies:

(3) Each $d_{t}$ contains on the left hand component of $S_{1}-d_{t}$ only the
curve $c_{1};$

(4) The disk $D_{0}$ is in general position with respect to $\mathcal{F}$ and
there is a unique anti-vanishing cycle $a_{0}\subset D_{0}$ i.e. $a_{0}$ is
contained in $D_{0}$ and it is equivalent to the anti-vanishing cycle
$a_{1}^{1}.$ Also, the disk $D_{1}$ is in general position with respect to
$\mathcal{F}$ and every anti-vanishing cycles $a_{j}$ contained in $D_{1}$ is
equivalent to the anti-vanishing cycles $a_{j}^{1},$ $j=2,..,n_{1}$ respectively.

The property (4) above can be obtained by choosing $D_{0}$ (resp. $D_{1})$ in
a sufficient thin tubular neighborhood $U$ of $\partial W_{1}$ in $W_{1}$ with
respect to the standard metric of $S^{3}.$

Obviously the following statement is valid. With respect to the strong
topology defined above, there is $\varepsilon_{0}>0$ such that: each disc
$E_{0}$ (resp. $E_{1}),$ which is $\varepsilon_{0}-$close to $D_{0}$ (resp. to
$D_{1})$ is in general position with respect to $\mathcal{F}$ and each
anti-vanishing cycle appeared in $E_{0}$ (resp. $E_{1})$ is equivalent to the
anti-vanishing cycle $a_{0}$ (resp. to some anti-vanishing cycle $a_{i}^{1}).$

Now, let

\begin{itemize}
\item $\mathcal{I}=\{t\in \lbrack0,1]$ for which there exists an $\varepsilon
_{t}>0$ such that: if $E_{t}$ is a 2-disc in general position with respect to
$\mathcal{F}$ and $\varepsilon_{t}-$close to $D_{t}$ with $\partial
E_{t}\subset S_{1},$ then in $E_{t}$ appears a single anti-vanishing cycle
which is equivalent to $a_{0}\}.$
\end{itemize}

Since we have assumed that all the spots of any $B_{i},$ and hence of $B_{1},$
are free and essential we deduce that $\mathcal{I}\neq \varnothing$ and if
$t_{0}=\max \mathcal{I}$ then $t_{0}<1.$

\textit{Claim 1}: $t_{0}\notin \mathcal{I}$.

\textit{Proof of Claim 1}. We assume that $t_{0}\in \mathcal{I}$. Then we
consider a $t^{\prime}$ on the right hand side of $t_{0}$ such that
$D_{t^{\prime}}$ is $\varepsilon_{t_{0}}/2$ close to $D_{t_{0}}.$ It follows
that, all the discs $E_{t}$ which are in general position with respect to
$\mathcal{F}$ and $\varepsilon_{t_{0}}/4-$close to $D_{t^{\prime}}$ have a
single anti-vanishing cycle equivalent to $a_{0}.$ This contradicts the
definition of $t_{0}$ as maximum of $\mathcal{I}$ and proves Claim
1.\smallskip

\textit{Claim 2}: There is a neighborhood $U_{\varepsilon}(h_{t_{0}})$ of
$h_{t_{0}}$ (denoted also by $U_{\varepsilon}(D_{t_{0}}))$ such that: if
$g:D^{2}\rightarrow S^{3}$ is an embedding in general position with respect to
$\mathcal{F}$ with $g(D^{2})\in U_{\varepsilon}(D_{t_{0}})$ and $g(\partial
D^{2})\subset \partial B_{1}$ then all the anti-vanishing cycles which appear
in $g(D^{2})$ are equivalent.

\textit{Proof of Claim 2}. In this proof we need the following terminology. If
$V$ is a trivial neighborhood of some conic singularity of $\mathcal{F}$ we
will denote by $\partial_{t}\overline{V}$ the two perfect discs in the
boundary $\partial \overline{V}$ of the closure $\overline{V}$ of $V.$ We
consider now a finite number of open subsets $U_{i}$ of $S^{3}$ which cover
$D_{t_{0}}$ such that: each $U_{i}$ is either a local chart of $\mathcal{F}$
or a trivial neighboord of some conic singularity of $\mathcal{F}$ and
$D_{t_{0}}$ intersects each $U_{i}.$ Furthermore, since the set of
singularities of $\mathcal{F}$ is finite we may assume that, if $U_{k}$ is any
trivial neighborhood of a conic singularity then $\partial_{t}\overline{U_{k}%
}\cap U_{j}=\emptyset$ for each $j\neq k.$ Let $U=\cup_{i}U_{i}.$ Obviously,
we may choose an $\varepsilon>0$ such that if $g,$ $g^{\prime}\in
U_{\varepsilon}(D_{t_{0}})$ then the discs $g(D^{2})$ and $g^{\prime}(D^{2})$
are contained in $U$ and both intersect each $U_{i}.$ From the properties of
the cover $\{U_{i}\}_{i}$ and in particular, since $\partial_{t}%
\overline{U_{k}}\cap U_{j}=\emptyset$ for each trivial neighborhood $U_{k}$
and each $U_{j}$ with $j\neq k,$ we deduce that the anti-vaniching cycles
which appear in $g(D^{2})$ and $g^{\prime}(D^{2})$ are equivalent. In fact,
for each trivial neighborhood $U_{k},$ the foliation $\mathcal{F}_{|U_{k}}$
induces the same anti-vanishing cycle on $g(D^{2})$ and on $g^{\prime}%
(D^{2}).$ This proves Claim 2.

Now from Claim 2, we may find positive numbers $\varepsilon_{t}$ and
$\varepsilon_{t^{\prime}}$ and discs $E_{t}\in U_{\varepsilon_{t}}(D_{t}),$
$E_{t^{\prime}}\in U_{\varepsilon_{t^{\prime}}}(D_{t^{\prime}})$ on the left
and the right side of $D_{t_{0}}$ respectively, which are in general position
with respect to $\mathcal{F}$ and which have equivalent anti-vanishing cycles.
Therefore, we get a contradiction to the definition of the set $\mathcal{I}.$
On the other hand, we may always assume that all the free spot of $B_{1}$ (as
well as, of any $B_{i})$ are essential. Therefore all the spots of $B_{1}$
cannot be free. In other words, there are spots of $B_{1}$ which are captured
between them. Therefore from Proposition \ref{captured spot} we get our theorem.
\end{proof}

\section{Proof of the main Theorem}

Let $\mathcal{F}$ be a Morse foliation on $S^{3}$ allowed to have bubbles,
truncated bubbles, as well as, vanishing cycles.

\begin{theorem}
\label{corollary2} Assuming that $\mathcal{F}$ does not have trivial pair of
singularities then $\mathcal{F}$ has either a Morse/truncated Morse component
or a Reeb/truncated Reeb component. Therefore, modulo a Morse modification,
$\mathcal{F}$ has either a Morse or a Reeb component.
\end{theorem}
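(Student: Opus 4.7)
The plan is to split on whether $\mathcal{F}$ admits a vanishing cycle and then invoke the two main structural results already proved: Theorem \ref{Reeb component} for the vanishing-cycle case and Theorem \ref{existence Morse component} for the complementary one. A final bookkeeping step converts truncated components into standard ones by a single Morse modification.

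In the first case, if $\mathcal{F}$ admits a vanishing cycle, then Theorem \ref{Reeb component} yields a generalized Reeb component, i.e., a Reeb component to which finitely many bubbles have been attached. Each such bubble can be eliminated by the two-step procedure of Section 2.6 (insert a center inside the bubble to create a trivial pair of singularities, then remove that trivial pair). This elimination is a composition of Morse modifications, is supported inside the bubble, and does not disturb the underlying Reeb structure; what remains, modulo Morse modifications, is a genuine Reeb component of $\mathcal{F}$.

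In the second case, $\mathcal{F}$ has no vanishing cycle. I would apply Remark \ref{elimination of all} to eliminate all bubbles and all truncated bubbles of $\mathcal{F}$, producing a Morse foliation $\mathcal{F}'$ with no trivial pairs, no bubbles and no truncated bubbles. Because each elimination is supported inside a $3$-ball whose boundary lies in a leaf, it does not introduce new essential loops in leaves outside the ball and hence cannot create a vanishing cycle where none existed. If $\mathcal{F}'$ already contains a Morse or truncated Morse component, then reversing the elimination sequence produces the corresponding component in $\mathcal{F}$. Otherwise, conditions (1)--(5) of Section 4 all hold for $\mathcal{F}'$ (condition (1) being inherited from the implicit standing hypothesis that not all leaves of $\mathcal{F}$ are simply connected, cf.\ Theorem \ref{main theorem}), so Theorem \ref{existence Morse component} supplies a truncated Reeb component for $\mathcal{F}'$, and the inverse modifications transport it to a truncated Reeb component of $\mathcal{F}$. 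For the second assertion, observe that a truncated Reeb (resp.\ Morse) component differs from the corresponding full component only by a removed trivially foliated $3$-ball, and a single Morse modification reinserts that ball and produces a standard Reeb (resp.\ Morse) component.

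The main obstacle I anticipate is the verification that the bubble and truncated-bubble eliminations in the second case neither destroy a Morse or truncated Morse component already present in $\mathcal{F}$ nor create a vanishing cycle in $\mathcal{F}'$. Since each elimination is supported in a $3$-ball whose complement in $S^{3}$ is unaffected, this should reduce to a local verification, but one must be careful to check that a truncated Reeb component of $\mathcal{F}'$ does genuinely correspond, under the inverse modifications, to a truncated Reeb component of $\mathcal{F}$ rather than being absorbed into a bubble-elimination site.
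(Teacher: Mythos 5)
Your overall division into the vanishing-cycle case (Theorem \ref{Reeb component}) and the no-vanishing-cycle case (Theorem \ref{existence Morse component}) matches the paper's starting point, but the way you handle bubbles and truncated bubbles in the second case has a genuine gap, and it is exactly the point the paper's proof is built to avoid. You propose to eliminate all bubbles and truncated bubbles globally, obtain $\mathcal{F}'$, apply Theorem \ref{existence Morse component}, and then claim that ``the inverse modifications transport'' the truncated Reeb component back to $\mathcal{F}$. But bubble elimination is \emph{not} a Morse modification (a Morse modification is only the local move $(A)$ or $(B)$ near one conic singularity); it is a surgery that replaces the entire, possibly very complicated, foliation inside a $3$-ball by trivial discs and merges leaves across that ball. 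Its inverse re-inserts a nontrivially foliated ball attached along a point of a leaf. If the truncated Reeb component produced in $\mathcal{F}'$ has a leaf that runs through an elimination site -- and nothing in Theorem \ref{existence Morse component} lets you control where the component sits -- then undoing the elimination attaches a nontrivial bubble to that leaf, and what you recover in $\mathcal{F}$ is at best a ``generalized/singular'' component, not a Reeb or truncated Reeb component as the statement requires. The same problem affects your claim that a Morse/truncated Morse component of $\mathcal{F}'$ pulls back to one of $\mathcal{F}$, and your assertion that the eliminations are ``a composition of Morse modifications'' in the first case is likewise not consistent with the paper's definitions. You flag this difficulty in your final paragraph, but flagging it is not resolving it, and resolving it is essentially the whole content of the paper's argument.

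The paper proceeds differently precisely to sidestep this transport problem: it never eliminates bubbles in this proof. When bubbles are present it passes to an \emph{innermost} bubble $B$ of $\mathcal{F}$ itself and reruns the Section 4 machinery inside $B$, where no further bubbles exist, so the Reeb/truncated Reeb component found there is already a component of $\mathcal{F}$. When truncated bubbles are present it converts them by local ``corrective movements'' along the annuli $C$ (creating special bubbles with two conic singularities) plus arbitrarily small Morse modifications, and then checks explicitly that the inverses of these genuinely local moves do not destroy the component: the annuli where the corrective movements occur do not meet the toral leaf $T$, and at most one inverse Morse modification touches the component (turning a Reeb component into at worst a truncated one, or occurring near the singularity $s$ of the pair-of-pants configuration of Figure 11). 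To repair your argument you would either have to adopt this innermost-bubble/corrective-movement strategy, or prove a nontrivial localization statement ensuring that the component furnished by Theorem \ref{existence Morse component} can be chosen disjoint from all elimination sites; as written, that step fails.
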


\begin{proof}
From Theorem \ref{Reeb component}, we may assume that $\mathcal{F}$ does not
have Morse or truncated Morse components and vanishing cycles. If
$\mathcal{F}$ does not also have bubbles and truncated bubbles the proof
follows from Theorem \ref{existence Morse component}. If $\mathcal{F}$ has
bubbles but not truncated bubbles then we pick an innermost bubble $B;$ that
is, in the interior of $B$ there does not exist another bubble of
$\mathcal{F}$. Now, we may apply our method of work in $B$ and prove, as in
the previous paragraph, that in $B$ there is a Reeb/truncated Reeb component.

From now on we assume that $\mathcal{F}$ has also truncated bubbles. Thus, if
$W$ is a truncated bubble of $\mathcal{F}$ then the configuration of Figure 7
appears and the notation of paragraph of Section 2 will be used. Therefore,
for $W$ we may modify locally $\mathcal{F}$ in a neighborhood of annulus $C$
so that the disc $E_{1}$ fits with the disc $E_{2}$ so that $c_{1}=c_{2}.$
Applying for any truncated bubble $W$ a such modification on $\mathcal{F}$,
which will be referred to as a \textit{corrective movement}, we get a new
foliation, say $\mathcal{F}^{\prime}.$ It is obvious now that $\mathcal{F}%
^{\prime}$ does not have truncated bubbles. The union $E_{1}\cup E_{2}$ is
homeomorphic to $S^{2}$, it contains two conic singularities, say $s_{1},$
$s_{2},$ and will be referred to as a \textit{special bubble with two conic
singularities}. Therefore after applying finitely many, arbitrary small Morse
modifications around $s_{1}$ or $s_{2}$ we may assume that each leaf contains
at most one singularity. Let denote by $\mathcal{F}^{\prime \prime}$ the
obtained foliation after applying these last Morse modifications on
$\mathcal{F}^{\prime}.$

Now let $B_{1}$ be an innermost bubble of $\mathcal{F}^{\prime \prime}.$ In
$B_{1}$ we may prove that there exists Reeb/truncated Reeb component as
before. If $B_{1}$ existed as an innermost bubble of $\mathcal{F}$ then we
deduce that $\mathcal{F}$ has a Reeb/truncated Reeb component. Hence we assume
that $B_{1}$ is an innermost bubble created after applying the corrective
movements and the Morse modifications described above.

We assume first that in the interior of $B_{1}$ there is a Reeb component $R$
and let $T=\partial R$ be the toral leaf of $R.$ Then applying the inverse
Morse modifications which lead $\mathcal{F}^{\prime \prime}$ back to
$\mathcal{F}^{\prime}$ it is clear that at most one of these modifications
affect the Reeb component $R.$ Therefore $R$ either remains a Reeb component
of $\mathcal{F}^{\prime}$ or it is transformed to a truncated Reeb component
of $\mathcal{F}^{\prime}.$ On the other hand, we may see that the inverse of
corrective movements which lead $\mathcal{F}^{\prime}$ back to $\mathcal{F}$
do not affect the Reeb component $R.$ This results from the fact that the
annuli $C$ in a neighborhood of which the corrective movements take place, do
not touch the torus $T.$ Therefore, $\mathcal{F}$ has a Reeb/truncated Reeb component.

Finally, we assume that in the interior of $B_{1}$ there is a truncated Reeb
component $K.$ Then in $B_{1}$ must appear the configuration of Figure
11.\textbf{\ }That is, a solid pair of pants $P$ with two captured spots and a
conic singularity $s$ in $P.$ Therefore, at most one of the inverse Morse
modifications which lead $\mathcal{F}^{\prime \prime}$ back to $\mathcal{F}%
^{\prime},$ may take place in a neighborhood of $s.$ This proves as before
that $K$ remains a truncated Reeb component of $\mathcal{F}$.
\end{proof}

If to the leaves of a Morse/truncated Morse component or to the leaves of a
Reeb/truncated Reeb component we add finitely many trivial bubbles the
components will be called \textit{singular}.

As a corollary of the Theorem above we have.

\begin{corollary}
\label{conclusion}Assuming that $\mathcal{F}$ does not have all its leaves
simply connected then $\mathcal{F}$ has either a singular Morse/truncated
Morse component or a singular Reeb/truncated Reeb component. Therefore, modulo
Morse modifications, $\mathcal{F}$ has either a singular Morse component or a
singular Reeb component.
\end{corollary}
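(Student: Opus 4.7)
The plan is to reduce the corollary to Theorem~\ref{corollary2} by first eliminating all trivial pairs of singularities of $\mathcal{F}$, applying that theorem, and then reversing the elimination to transfer the conclusion back to $\mathcal{F}$.

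First, using the local procedure of Section~2.4, I would successively eliminate every trivial pair of singularities of $\mathcal{F}$. Each elimination replaces $\mathcal{F}_{|B}$ on a 3-ball $B$ bounded by $D\cup E$ by a foliation by parallel discs; the only leaves destroyed are the center, the spherical leaves of the trivial bubble $\overline{V}$, and the double-cone singular leaf through $s$, all of which are simply connected. Leaves that pass through $B$ via $D$ are affected only by replacement of a cap disc by a parallel cap disc, with no change of topology. Since by hypothesis $\mathcal{F}$ has at least one non-simply-connected leaf, that leaf survives the whole process, and after finitely many steps I obtain a Morse foliation $\mathcal{F}_0$ with no trivial pairs of singularities and still carrying at least one non-simply-connected leaf.

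Applying Theorem~\ref{corollary2} to $\mathcal{F}_0$ then produces a component $\mathcal{C}_0$ of one of the four types (Morse, truncated Morse, Reeb, or truncated Reeb). I would then reverse the elimination, reinserting one trivial pair at a time. Each reinsertion is a local operation inside a small 3-ball contained in a single local chart of the current foliation; this 3-ball is either disjoint from the support of the current version of $\mathcal{C}_0$, in which case $\mathcal{C}_0$ is unchanged, or it lies in the interior of that support, in which case the operation simply attaches a trivial bubble to one leaf of $\mathcal{C}_0$. When $\mathcal{F}$ is recovered, $\mathcal{C}_0$ has thereby been decorated by finitely many trivial bubbles attached to some of its leaves, which by the definition preceding the corollary is exactly a singular Morse/truncated Morse or singular Reeb/truncated Reeb component. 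This proves the first statement.

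For the second statement, a Morse modification of type~(A) applied around the conic singularity bounding any truncated component transforms a truncated Reeb or truncated Morse component into a full Reeb or Morse component, as noted in the introduction, and such a modification leaves the attached trivial bubbles untouched. Hence, modulo Morse modifications, $\mathcal{F}$ contains a singular Morse component or a singular Reeb component. The delicate point of the whole argument is checking that no reinsertion 3-ball straddles the tangent boundary of $\mathcal{C}_0$: this holds because that tangent boundary is a union of leaves of $\mathcal{F}_0$, whereas the local chart used for each reinsertion is by construction transverse to the foliation and hence disjoint from singular leaves, so the component structure of $\mathcal{C}_0$ can only be decorated, never disrupted, by the reinsertions.
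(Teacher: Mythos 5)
Your proposal is correct and takes essentially the same route as the paper, whose entire proof is simply to remove all trivial pairs of singularities and apply Theorem \ref{corollary2}; your reinsertion bookkeeping is exactly the content the paper leaves implicit in its definition of singular components just before the corollary. (One quibble: your closing justification that the reinsertion ball is ``transverse to the foliation and hence disjoint from singular leaves'' is misstated, since that ball is product-foliated and can perfectly well meet the boundary leaf of the component, but this is harmless because a trivial bubble attached to any leaf, boundary or not, is absorbed by the definition of a singular component.)
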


\begin{proof}
If we remove all trivial pair of singularities of $\mathcal{F}$ and we apply
Theorem \ref{corollary2} we obtain the result.
\end{proof}

\section{Stability of Morse foliations}

The goal of this section is to prove the following theorem:

\begin{theorem}
Let $\mathcal{F}$ be a Morse foliation on $S^{3}.$ Then $\mathcal{F}$ is
$C^{1}$-instable unless all the leaves of $\mathcal{F}$ are compact and simply
connected and each leaf contains at most one conic singularity.
\end{theorem}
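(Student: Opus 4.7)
I will prove the contrapositive: if $\mathcal{F}$ does not satisfy the stated conditions, then $\mathcal{F}$ is $C^{1}$-instable. So suppose the hypothesis fails, which means that at least one of the following holds: (a) some leaf of $\mathcal{F}$ carries two or more conic singularities, (b) not all leaves of $\mathcal{F}$ are simply connected, or (c) some leaf is non-compact. In each case I will exhibit an arbitrarily $C^{1}$-small perturbation $\mathcal{F}'$ of $\mathcal{F}$ which is not topologically conjugate to $\mathcal{F}$, and this suffices for instability (and then for Theorem \ref{stability}, since Morse foliations are generic in the space of Haefliger structures, as noted in Section 2).

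In case (a), I will use Remark \ref{Morse modification}: an arbitrarily $C^{1}$-small Morse modification around one of the conic singularities on a multi-singular leaf $L$ will separate that singularity off onto a nearby leaf. The resulting foliation $\mathcal{F}'$ has strictly more singular leaves than $\mathcal{F}$ (and the incidence pattern of conic singularities to leaves is altered), so no homeomorphism of $S^{3}$ sending leaves to leaves can relate $\mathcal{F}$ and $\mathcal{F}'$. Hence $\mathcal{F}$ is $C^{1}$-instable.

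In cases (b) and (c) I reduce to Corollary \ref{conclusion} as follows. Eliminating trivial pairs of singularities is itself a $C^{1}$-small operation, as is performing Morse modifications; so if the perturbation argument below works for the reduced foliation, it lifts to a $C^{1}$-small perturbation of $\mathcal{F}$. Thus, modulo such moves, $\mathcal{F}$ contains either a singular Reeb component or a singular Morse component. For a Reeb component I will invoke the classical Rosenberg--Roussarie argument (see \cite{[Ros-Rou]}): a suitable $C^{1}$-small perturbation in a neighborhood of the boundary torus $T$ opens $T$ up into a pair of spiraling non-compact leaves, so that the new foliation no longer contains a compact toral leaf in that location and therefore cannot be conjugate to $\mathcal{F}$. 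For a Morse component I will destabilize analogously: the component has a distinguished singular leaf $C$ (the pseudo-torus) containing the unique conic singularity $s$ of the component. A Morse modification of type $(B)$ arbitrarily close to $s$ (in the $C^{1}$-topology) shrinks an essential curve on a nearby toral leaf to a point, replacing the local structure $(\text{center}, s, C, \text{spherical leaves})$ by a structure without a pseudo-torus leaf; thus the combinatorial data of singular leaves and their adjacencies is changed, and conjugacy is impossible.

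The main obstacle is the last point: showing that the perturbed foliation is genuinely non-conjugate to $\mathcal{F}$, rather than merely $C^{1}$-close. For Reeb components this is classical and handled in \cite{[Ros-Rou]}. For Morse components the delicate step is to argue that the pseudo-torus, together with the center of the same component and the conic singularity on it, is a topologically distinguished configuration preserved under any leaf-preserving conjugacy; destroying or splitting this configuration by a Morse modification therefore produces a genuinely inequivalent foliation. Finally, since all of the reductions used in Corollary \ref{conclusion} (elimination of trivial pairs, bubbles, truncated bubbles, and Morse modifications) are themselves $C^{1}$-small, the instability of the reduced foliation transfers back to $\mathcal{F}$, completing the proof.
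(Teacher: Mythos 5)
Your case (a) and the idea of reducing (b)–(c) to Corollary \ref{conclusion} and then transferring instability back through the (small) Morse modifications are in line with the paper, but both of the destabilization arguments that carry the real weight have gaps. In the Reeb case you assert that a $C^{1}$-small perturbation near the boundary torus $T$ "opens $T$ up into spiraling non-compact leaves." This skips the holonomy obstruction that is the whole point of the Rosenberg--Roussarie analysis: a $C^{1}$-small perturbation that thickens (let alone removes) a compact toral leaf is only available when $T$ is \emph{flat}, i.e.\ every holonomy element has infinite contact with the identity. Accordingly, the paper's proof must first establish (Claim 1) that some toral leaf of some Reeb component is flat --- using the erasing procedure of \cite{[Ros-Rou]} together with the structure results of this paper to show that not all Reeb components can be erased --- and then (Claim 2) that this flat leaf can be thickened into a band $T\times[0,1]$ of toral leaves; the instability finally comes from the standard fact that such a product band can be $C^{1}$-approximated rel boundary by foliations with different finite families of compact leaves, which cannot all be conjugate to one another. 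Your version supplies neither the existence of the small perturbation nor a valid non-conjugacy criterion: "no compact toral leaf in that location" is not a conjugacy invariant, since a conjugating homeomorphism need not preserve locations.

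The Morse-component case has a more basic defect. A Morse modification of type $(B)$ performed in a trivial neighborhood of the conic singularity $s$ of the component does not destroy the distinguished configuration; it merely slides it. A nearby toral leaf gets pinched into a new pseudo-torus, the old pseudo-torus is resolved into a spherical leaf, and the result is again a Morse component with exactly the same qualitative structure (center, spheres, pseudo-torus, tori); one can write down a leaf-preserving homeomorphism of $S^{3}$, equal to the identity outside the component, carrying the modified foliation to the original. So the perturbation you propose produces a \emph{conjugate} foliation and cannot witness instability, and the "delicate step" you flag cannot be completed as stated. The paper's mechanism is different: it looks at the boundary torus of the Morse component and argues that either $\mathcal{F}$ already contains a band of toral leaves there, or a Morse modification yields a $C^{1}$-close foliation containing such a band; in both cases instability again comes from perturbing the product band rel boundary, not from removing the pseudo-torus. (The paper also proves the complementary stability assertion via Reeb stability, which your contrapositive does not address, though the literal statement does not require it.)
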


\begin{proof}
First, we remark that if a foliation $\mathcal{F}$ has a leaf $L$ containing
more than one conic singularity then by a Morse modification we may obtain a
foliation $\mathcal{F}^{\prime}$ arbitrary close to $\mathcal{F}$ in the
$C^{1}$-topology, such that each leaf of $\mathcal{F}^{\prime}$ has at most
one conic singularity. This implies that a Morse foliation $\mathcal{F} $ in
order to be stable should contain at most one conic singularity in the same leaf.

Now we will show that if $\mathcal{F}$ satisfies the assumptions of the
theorem then it is structural stable. Indeed, from Reeb stability theorem
\cite{[Reeb]}, it follows that if $L$ is a non-singular leaf of $\mathcal{F}$
diffeomorphic to $S^{2}$ then $L$ has a tubular neighborhood $V$ homeomorphic
to $S^{2}\times \lbrack-1,1]$ such that the interior of $V$ is foliated by the
product foliation $S^{2}\times \{t\},$ $t\in(-1,1)$ and the leaves $L_{\pm1}$
passing from the points $S^{2}\times \{ \pm1\}$ respectively are singular.
Since $L_{1}$ (resp. $L_{-1})$ is simply connected we have that $L_{1}$ is
homeomorphic to two copies of $S^{2}$ attached to a point. Therefore, Reeb
stability theorem can be applied beyond the leaf $L_{1}$ (resp. $L_{-1}).$
Since $\mathcal{F}$ has finitely many singularities our statement follows.

In the following we assume that $\mathcal{F}$ contains leaves which are
non-compact and simply connected and we will prove that $\mathcal{F}$ is
instable. From Corollary \ref{conclusion}\textbf{\ }we have that if
$\mathcal{F}$ does not have a singular Morse/truncated Morse component then
$\mathcal{F}$ has a singular Reeb/truncated Reeb component. Without loss of
generality, we may assume that all the previous components are not singular
i.e. their leaves do not contain trivial bubbles. Indeed, one may verify that
our proof below is applied for singular components.

So, we assume first that $\mathcal{F}$ has a Morse component $\mathcal{V}.$
Let $T$ be the boundary leaf of $\mathcal{V}$ which is either a torus or a
pseudo-torus. Without loss of generality we may assume that $T$ is a torus and
that there exists a neighborhood $B$ of $T$ such that $\mathcal{F}$ induces on
$B$ a band of leaves i.e. $\mathcal{F}_{|B}$ is the product foliation
$T\times \{t\},$ $t\in \lbrack0,1].$ Indeed, if such a band does not exist then
by a Morse modification we may construct a new foliation $\mathcal{F}^{\prime
}$ which $C^{1}$-close to $\mathcal{F}$ and which contains a band of leaves.
Thus $\mathcal{F}$ is instable. On the other hand, if $\mathcal{F}$ contains a
band of leaves around $T$ then we will show that $\mathcal{F}$ is again
instable. Indeed, it is a standard fact that a stable foliation cannot contain
any band since the product foliation on $B$ can be $C^{1}$-approximated, rel
$\partial(T^{2}\times \lbrack0,1])$ by a foliation with a finite number of
compact leaves.

In the case that $\mathcal{F}$ has a generalized Morse component then the same
proof can be applied and it always results that $\mathcal{F}$ is instable. For
instance, by a Morse modification the generalized Morse component is
transformed to Morse component, say $\mathcal{V},$ and $\mathcal{F}$ is
transformed to a new foliation, say $\mathcal{F}^{\prime}.$ As we have shown,
the existence of $\mathcal{V}$ implies that $\mathcal{F}^{\prime}$ is instable
and hence the initial foliation $\mathcal{F}$ is instable too. Therefore, we
may assume in the following that $\mathcal{F}$ does not have neither Morse
components nor truncated Morse components

Now, without loss of generality, we may assume that $\mathcal{F}$ has only
Reeb components. Indeed, by applying finitely many Morse modifications,
$\mathcal{F}$ is transformed to new Morse foliation, say $\mathcal{F}^{\prime
},$ which has only Reeb components. Obviously, if we show that $\mathcal{F}%
^{\prime}$ is instable then the same will be true for $\mathcal{F}$.

\textit{Claim 1}: One of the toral leaves, say $T,$ of some Reeb component of
$\mathcal{F}^{\prime}$ should be flat, i.e. all the elements of the holonomy
group of $T$ have $\infty$ contact with the identity \cite{[Ros-Rou]}.

The proof of this claim is similar to the proof of the corresponding statement
(see \cite{[Ros-Rou]}, Proof of Theorem 1.1). We have to notice only that the
method of erasing Reeb components, as it is explained in \cite{[Ros-Rou]}, if
it is applied for Morse foliations does not create neither Morse components
nor foliations with all the leaves compact and simply connected. This proves
that in our case, all Reeb components of $\mathcal{F}^{\prime}$ cannot be
erased so some toral leaf $T$ is flat.

\textit{Claim 2}: The leaf $T$ can be thickened, i.e. $\mathcal{F}^{\prime}$
can be $C^{1}$-approximated by a foliation $\mathcal{F}^{\prime \prime}$ such
that $\mathcal{F}^{\prime \prime}=\mathcal{F}^{\prime}$ outside a tubular
neighborhood $V$ of $T,$ and $\mathcal{F}^{\prime \prime}$ has a band of toral leaves.

The proof of this claim is identical to the proof of Lemma a in
\cite{[Ros-Rou]}.

As we noted above, the band $T\times \lbrack0,1]$ of toral leaves can be
$C^{1}$-approximated, rel $\partial(T\times \lbrack0,1])$ by a foliation with a
finite number of compact leaves. Therefore $\mathcal{F}^{\prime}$ can be
$C^{1}$-approximated by a Morse foliation $\mathcal{G}^{\prime}$ for which
there does not exist a homeomorphism $h$ sending $\mathcal{G}^{\prime}$ to
$\mathcal{F}^{\prime}.$ Now, going back to $\mathcal{F}$ by applying to
$\mathcal{F}^{\prime}$ the inverse Morse modifications, it is easy to see that
by means of $\mathcal{G}^{\prime},$ we may construct a foliation $\mathcal{G}$
which is $C^{1}$-close to $\mathcal{F}$ and which is not topological
equivalent to $\mathcal{F}$.
\end{proof}

\begin{remark}
Theorem \ref{stability} can be easily derived from the previous theorem since
Morse foliations are generic in the space of Haefliger structures.
\end{remark}

\bigskip \noindent Charalampos Charitos\newline \noindent Department of Natural
Resources Management \& Agricultural Engineering\newline \noindent Agricultural
University Athens\newline \noindent75, Iera Odos, Athens - GREECE\newline%
\noindent e-mail: bakis@aua.gr


\begin{thebibliography}{99}                                                                                               %


\bibitem {[CaCo I]}A. Candel, L. Conlon, \textit{Foliations I}, AMS Graduate
Studies in Math. Vol. 23, 2000.

\bibitem {[CaCo]}A. Candel, L. Conlon, \textit{Foliations II}, AMS Graduate
Studies in Math. Vol. 60, 2003.

\bibitem {[Cam Sca]}C. Camacho, B. Sc\'{a}dua, \textit{On codimension one
foliations with Morse singularities on three-manifolds}, Topology Appl., Vol.
154, 1032-1040, 2007.

\bibitem {[Cam Sca1]}C. Camacho, B. Scardua, \textit{On foliations with Morse
singularities}, Proc. Amer. Math. Soc. 136 4065--4073, 2008.

\bibitem {[Ferry]}S. Ferry,\textit{ Codimension one Morse theory}, Thesis,
Univ. of Michigan, 1973.

\bibitem {[Lawson]}H. B. Lawson, Jr., \textit{Foliations}, Bull. A.M.S. 80,
369-418, 1974.

\bibitem {[Haefliger]}A. Haefliger, \textit{Feuilletages sur les
vari\'{e}t\'{e}s ouvertes}, Topology 9, 183-194, 1970.

\bibitem {[Hirsch]}M. Hirsch, Differential Topology, Springer-Verlag, 1976.

\bibitem {[Novikov]}S. P. Novikov, \textit{Topology of foliations}, Trans.
Moscow Math. Soc. 14, 268-305, 1965.

\bibitem {[Reeb]}G. Reeb, \textit{Sur certaines propri\'{e}t\'{e}s
topologiques des vari\'{e}t\'{e}s feuill\'{e}t\'{e}es}. Actualit\'{e}s Sci.
Indust. 1183. Paris: Hermann, 1952.

\bibitem {[Reeb1]}G. Reeb, \textit{Sur les points singuliers d'une forme de
Pfaff compl\`{e}tement int\'{e}grale ou d'une fonction num\'{e}rique}, C. R.
Acad. Sci. Paris, Vol. 222, 847-849, 1946.

\bibitem {[Rosati]}L. Rosati, \textit{On smooth foliations with Morse
singularities}, Topology Appl. Vol. 159, 1388--1403, 2012.

\bibitem {[Ros-Rou]}H. Rosenberg, R. Roussarie, \textit{Some remarks on
stability of foliations}, J. Diff. Geom. 10, 207-219, 1975.

\bibitem {[Wagneur]}E. Wagneur, \textit{Formes de Pfaff \`{a} singularit\'{e}s
non d\'{e}g\'{e}n\'{e}r\'{e}es}, Ann. Inst. Fourier (Grenoble), Vol. 28,
165-176, 1978.
\end{thebibliography}
\end{document}